\newcommand{\D}{\mathrm{d}}
\newcommand{\R}{\mathbb{R}}
\numberwithin{equation}{section}
\newcommand{\I}{\mathrm{i}}
\newtheorem{theorem}{Theorem}[section]
\newtheorem{lemma}[theorem]{Lemma}
\newtheorem{proposition}[theorem]{Proposition}
\theoremstyle{definition}
\newtheorem{remark}[theorem]{Remark}
\newcommand{\FR}{\mathbb{R}}
\newcommand{\rt}{\mathbb{R}^2}
\newcommand{\Sb}{\mathbb{S}}
\newcommand{\F}{\mathcal{F}}
\newcommand{\E}{\mathcal{E}}
\title[Inversion formula, Unique continuation  property, and range characterization]{Inversion formula, Unique continuation  property, and range characterization of the mixed ray transform in $\R^2$}
\author[R. K. Mishra, S. K. Sahoo, and C. Thakkar]{Rohit Kumar Mishra, Suman Kumar Sahoo, and Chandni Thakkar}
\address{Department of Mathematics, IIT Gandhinagar, Gujarat, India}
\email{rohit.m@iitgn.ac.in, rohittifr2011@gmail.com}
\address{Department of Mathematics, Seminal for Applied Mathematics, ETH Z\"urich, Switzerland}
\email{suman.sahoo@math.ethz.ch,sumansahootifr@gmail.com}
\address{Department of Mathematics, IIT Gandhinagar, Gujarat, India}
\email{thakkar\_chandni@iitgn.ac.in}
\begin{document}
\begin{abstract}

In this article, we study various aspects of the mixed ray transform of $(k + \ell)$-tensor fields that are symmetric in its first $k$ and last $\ell$ indices. As a first result, we derive an inversion algorithm to recover the solenoidal part of the unknown tensor field using the normal operator of the mixed ray transform. Next, we establish a set of unique continuation results. In addition to these, we discuss the range characterization of the mixed ray transform as the final result.
\end{abstract}

	\subjclass[2020]{46F12,45Q05}
	\keywords{Mixed ray transform, normal operator, unique continuation, range characterization}
\maketitle
\section{Introduction}

For an integer $m \geq 0$, let $\textit{T}^m = \textit{T}^m(\mathbb{R}^2)$ be the space of $m$-tensor fields defined on $\mathbb{R}^2$ and $\mathit{S}^m = \mathit{S}^m(\mathbb{R}^2)$ be its subspace consisting of symmetric $m$-tensor fields. Furthermore,  let  $C^\infty (\mathit{S}^m)$ denote the space of smooth symmetric $m$-tensor fields and $C_c^\infty (\mathit{S}^m)$ be the space of compactly supported tensor fields in $C^\infty (\mathit{S}^m)$. The notation $C^{\infty}(\mathit{S}^k \times \mathit{S}^\ell)$ is used for the space of smooth tensor fields that are symmetric with respect to first $k$ and last $\ell$ indices in $\FR^2$. Moreover, $T\Sb^1$ is the tangent bundle of the unit circle in $\rt$. The space of straight lines in $\mathbb{R}^2$ is  parameterized by 
\[T\mathbb{S}^{1}= \left\{(x,\xi) \in \mathbb{R}^2 \times \mathbb{S}^{1}: \left<x, \xi\right> = 0\right\}.\]
Here $\langle \cdot, \cdot \rangle$ denotes the usual dot product in $\mathbb{R}^2$. Any point $(x,\xi) \in T\Sb^1$ determines a unique line $\displaystyle\{ x+t\xi: t \in \mathbb{R}\}$.

Let $f \in C_c^{\infty}( S^k\times S^\ell)$, the \emph{mixed ray transform} (MiRT) $L_{k,\ell}:C_c^{\infty}( S^k\times S^\ell) \rightarrow C^{\infty} (T\Sb^1) $ is a bounded linear operator which is defined as:
\begin{equation}\label{eq:mixed ray transform}
    \begin{aligned}
\mathit{L}_{k, \ell}f (x, \xi) &= \int_{-\infty}^\infty f_{i_1 \dots i_k j_1 \dots j_\ell} (x + t \xi)\xi^{i_1} \cdots \xi^{i_k} \eta^{j_1} \cdots \eta^{j_\ell} \,dt,
\end{aligned}
\end{equation}
where $\eta $ is a unit vector orthogonal to $\xi = (\xi_1,\xi_2)$. Since we are working in two dimensions, we can take  $\eta = \xi^\perp = (-\xi_2, \xi_1)$. Note that we use Einstein summation convention in equation \eqref{eq:mixed ray transform}, and we will use it throughout the article. If we take $\ell = 0$  and $f\in C_c^{\infty}(S^m)$, then the equation \eqref{eq:mixed ray transform} gives the classical ray transform of symmetric $m$ tensor fields, which has a wide range of applications in medical imaging; see \cites{Sharafutdinov_1994,PSU_book}, denoted as $I_m$ and defined by
 \begin{align}\label{def_of_Im}
     I_mf(x,\xi)= \int_{-\infty}^\infty f_{i_1\cdots i_m}(x+t\xi)\, \xi^{i_1}\cdots \xi^{i_m}\, dt. 
 \end{align}
 The mixed ray transform, as originally introduced by Sharafutdinov in \cite{Sharafutdinov_1994}*{Chapter 7} for the analysis of inverse problems associated with shear waves in quasi-isotropic elastic media, serves as a fundamental tool. Further elucidation on the acquisition of mixed ray transform data from real-world observations can be found in \cite{Uniqueness_stability_MRT}. The aim of this article is to study the mixed ray transform (MiRT) and establish its invertibility, unique continuation properties (UCP), and the range characterization. More specifically, we establish the following results, see Section \ref{sec: definitions} for precise statements and more details:
\begin{enumerate}
    \item Inversion formula using the normal operator of the MiRT.
    \item Unique continuation property of the MiRT.
    \item  Range characterization of the MiRT.
\end{enumerate}
The inversion formula for the mixed ray transform of $f$ using the normal operator is inspired by Sharafutdinov's inversion formula for the longitudinal ray transform (LRT), derived in \cite{Sharafutdinov_1994}*{Section 2.12}. A different kind of inversion technique for MiRT is considered in \cite{Inversion_MRT}, where authors recover the unknown symmetric $m$-tensor field fully from the knowledge of three transforms: the longitudinal ray transform, transverse ray transform, and mixed ray transform in $\mathbb{R}^2$.  In \cite{Uniqueness_stability_MRT}, the authors proved a uniqueness and stability result using the normal operator of the mixed ray transform in dimensions $n\ge 3$ when $ f\in S^1\times S^1 \cup S^2\times S^2$ on a compact Riemannian manifold. Additionally, we refer to the works \cite{Joonas_2023,Rohit_Francois_Venky,Francois_2016,PSU,Salo_Uhlmann_JDG} for more results on ray transforms in various settings and the references therein.

The study of  unique continuation property of the ray transform of functions, also known as the interior tomography problem, has roots dating back to at least \cite{matias_ucp_2008}. In \cite{matias_ucp_2008}, the authors focused on reconstructing a compactly supported attenuation in a region based on its ray transform, assuming prior knowledge of the attenuation in a smaller region in $\rt$.  We refer to the works \cites{matias_semilocal_ucp,Katsevich_stability,Railo_ucp_radon} for related UCP results and stability estimates. UCP for the ray transform of compactly supported distributions and covector-valued distributions are proved in \cite{UCP_functions} and \cite{UCP_vector_fields}, respectively.  These findings are subsequently generalized for the longitudinal ray transform, momentum ray transform, and transverse ray transform of symmetric $m$-tensor fields in \cite{UCP_2022}. 

The approach in both \cite{UCP_vector_fields} and \cite{UCP_2022} involves reducing the unique continuation result of the ray transform of symmetric higher-ordered tensor fields to that of scalar functions and then applying results available for scalar functions in \cite{UCP_functions}, though with differing proof techniques. Our goal is to obtain unique continuation results for the mixed ray transform. We achieve this by leveraging the correlation established in \cite{kernel_MRT} between the mixed ray transform of $f$ and the longitudinal transform of a tensor field $g$, where $g$ is contingent on $f$. Moreover, in \cite{joonas2023unique} the authors introduced the fractional momentum ray transform acting on functions and studied the associated unique continuation property. 

The study of range characterization of the ray transform or the Radon transform is a classical topic and goes back to \cite{Helgason_1999,Gelfand_v_5}. We refer to the work \cite{Pantyukhina_range_2d} for the range of ray transform acting on symmetric tensor fields. We use similar ideas to obtain the range of the MiRT acting on tensor fields belonging to $S^k \times S^{\ell}.$  We also refer to the works \cite{Sharafutdinov_range_sobolev_space,krishnan2024ray, Momentum_ray_transform_2_2020, Rohit_2021, Rohit_Francois_JST, Assylbekov_Monard_Uhlmann, Sadiq_2023, Pestov_and_Uhlmann, PSU_IMRN, AMU_JMPA} for the range characterization of the ray transform, and  related  integral transforms.

The article is organized as follows. In Section \ref{sec: definitions}, we recall some notations, definitions, and state the main results of this article.  Section \ref{sec:proof_of_main_results} contains the proof of main results, which is further divided into three subsections. In subsection \ref{sec: inversion formula}, we present the inversion formula, then in subsection \ref{sec: UCP}, we prove the unique continuation property, and finally subsection \ref{sec:range_characterization} provides a proof for the range characterization result.

\section{Statement of main results}\label{sec: definitions}

 We first recall some notations and definitions which will be repeatedly used throughout the article. A detailed discussion of these definitions and operators can be found in Sharafutdinov's book, \cite{Sharafutdinov_1994}. 
 
The \textit{tensor product} of $u \in \textit{T}^m$ and $v \in \textit{T}^n$ is a tensor $u \otimes v \in \textit{T}^{m + n}$ and is defined by $$u \otimes v (x_1, \dots, x_m, x_{m + 1}, \dots, x_{m + n}) = u (x_1, \dots, x_m) v (x_{m + 1}, \dots, x_{m + n}).$$

 \noindent  The \textit{symmetrized tensor product} $u \odot v \in \textit{S}^{m + n}$ of $u \in \textit{S}^m$ and $v \in \textit{S}^n$ is given by
\begin{align*}
  u \odot v (x_1, \dots, x_m, x_{m + 1}, \dots, x_{m + n}) = \sigma (1, \dots, m + n) u (x_1, \dots, x_m) v (x_{m + 1}, \dots, x_{m + n})  
\end{align*}

\noindent where $\sigma (1, \dots, k)$ denotes the \textit{symmetrization operator}, an element of  the permutation group $\Pi_k$ on the set $\left\{1, \dots, k\right\}$, and defined as follows:
\begin{align}\label{def_of_sigma}
    \sigma (1, \dots, k) u (x_1, \dots, x_k) = \frac{1}{k!} \sum_{\pi \in {\Pi_k}} u (x_{\pi(1)}, \dots, x_{\pi(k)}).
\end{align}
\noindent For $p < k$, the operator of \textit{partial symmetrization }$\sigma (1, \dots, p)$ is defined in the following way:
$$\sigma (1, \dots, p) u (x_1, \dots, x_k) = \frac{1}{p!} \sum_{\pi \in {\Pi_p}} u (x_{\pi(1)}, \dots, x_{\pi(p)}, x_{p + 1}, \dots, x_k).$$

To state our main results we now recall some differential operators from \cite{Sharafutdinov_1994}*{Chapter 2}.   Define the operator of \textit{inner differentiation} (also known as symmetrized derivative) $\D : C^\infty (\mathit{S}^{m}) \rightarrow C^\infty (\mathit{S}^{m + 1})$ as follows:
\begin{equation} \label{def: inner differntiation}
\left(\D u\right) _{i_1 \dots i_{m + 1}} = \sigma (i_1, \dots, i_{m + 1}) \frac{\partial  u_{i_1 \dots i_m}}{\partial x^{i_{m + 1}}}.
\end{equation}
The \textit{Saint-Venant operator} $ R : C^{\infty} (\mathit{S}^{m}) \rightarrow C^{\infty} (\mathit{T}^{2m})$ is defined as follows:
\begin{equation}
    \left(Rf\right)_{i_1j_1 \dots i_mj_m} = \alpha(i_1j_1) \dots \alpha(i_mj_m)\frac{\partial^m f_{i_1 \dots i_m}}{\partial x_{j_1} \dots \partial x_{j_m}},
\end{equation}
where $\alpha(i_pi_q)$ known as the \textit{alternation operator} is given by 
$$\left(\alpha(i_pi_q) f\right)_{i_1 \dots i_r} = \frac{1}{2} \left( f_{i_1 \dots i_p \dots i_q \dots i_r} - f_{i_1 \dots i_q \dots i_p \dots i_r}\right).$$

\noindent Further, we introduce two additional operators crucial for understanding the kernel of the mixed ray transform, as discussed in \cite{kernel_MRT}. The first operator, denoted by $\lambda: C^\infty (\mathit{S}^{k - 1} \times \mathit{S}^{\ell - 1}) \rightarrow C^\infty (\mathit{S}^k \times \mathit{S}^\ell)$, is defined as follows:
\begin{equation} \label{def: lambda operator}
{(\lambda w)}{i_1 \dots i_k j_1 \dots j_\ell} = \sigma (i_1, \dots, i_k) \sigma (j_1, \dots, j_\ell) \delta_{i_1 j_1} w_{i_2 \dots i_k j_2 \dots j_\ell}.
\end{equation}
The second operator, denoted by $\D^\prime : C^\infty (\mathit{S}^{k-1} \times \mathit{S}^{\ell }) \rightarrow C^\infty (\mathit{S}^k \times \mathit{S}^\ell)$, is defined as follows:
\begin{equation}\label{def: d' operator}
{(\D^\prime u)}_{i_1 \dots i_k j_1 \dots j_\ell} = \sigma (i_1, \dots, i_k) \frac{\partial}{\partial x^{i_1}}u_{i_2 \dots i_k j_1 \dots j_{\ell}}.
\end{equation}
The dual of the operator $\lambda$ is denoted by $\mu : C^\infty (\mathit{S}^k \times \mathit{S}^\ell) \rightarrow C^\infty (\mathit{S}^{k - 1} \times \mathit{S}^{\ell - 1})$ and is given by:

\[{(\mu w)}_{i_1 \dots i_{k - 1} j_1 \dots j_{\ell - 1}} = w_{i_1 \dots i_k j_1 \dots j_\ell}\delta^{i_k j_\ell}.\]
And the dual of the differential operator $\D'$ is an operator $\delta' : C^\infty (\mathit{S}^{k} \times \mathit{S}^\ell) \rightarrow C^\infty (\mathit{S}^{k-1} \times \mathit{S}^{\ell })$ and defined as follows:

\[{(\delta' u)}_{i_2 \dots i_k j_1 \dots j_{\ell}} = \frac{\partial}{\partial x^{i_1}} u_{i_1 \dots i_k j_1 \dots j_\ell}.\]
With these notations established, we proceed to state the main results of this paper, beginning with the inversion formula utilizing the normal operator of the mixed ray transform.

\begin{theorem}\label{th:inversion_normal_ope}
    Let $f \in C_c^\infty (\mathit{S}^k \times \mathit{S}^\ell)$. 
    
Then, the components of $f^s$ can be written using the normal operator of the mixed ray transform as follows:
\begin{equation} \label{eq: inversion formula MRT}
f^s_{m_1 \dots m_k n_1 \dots n_\ell} = (-1)^{M_2}\, (-\Delta)^{1/2}(-\Delta')^{-N_2} P(D) \,  (N_{k,\ell}f)_{1\cdots 11\cdots 1}, 
\end{equation}
where $f^s$ is the divergence free and trace free component of $f$ according to Proposition \ref{thm: tensor field decomposition}. Here $ \Delta'= \partial^2_{x_2}$ is the Laplace operator in $x_2$ variable, and $P(D) = D^{M_2}_{x_1} \, D^{N_2}_{x_2}$, where $ D= \frac{1}{\I }\partial$, is a constant coefficient differential operator whose symbol is $y^{M_2}_1\, y^{N_2}_2$. Note that, here $M_2= M_2(m_1,\cdots, m_k)$ and $N_2= N_2(n_1, \cdots, n_{\ell})$; see equation \eqref{def_M_2_N_2} for more precise definition of $M_2$ and $N_2$. 
\end{theorem}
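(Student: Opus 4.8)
The plan is to follow Sharafutdinov's strategy for the longitudinal ray transform and pass to the Fourier side, where the normal operator $N_{k,\ell}=L_{k,\ell}^{*}L_{k,\ell}$ becomes an explicit matrix-valued Fourier multiplier. First I would compute the adjoint $L_{k,\ell}^{*}$ by pairing $L_{k,\ell}f$ against a test field over $T\Sb^1$, and combine it with the projection--slice identity $\widehat{L_{k,\ell}f}(\sigma,\xi)=\widehat f_{i_1\dots j_\ell}(\sigma\eta)\,\xi^{i_1}\cdots\xi^{i_k}\eta^{j_1}\cdots\eta^{j_\ell}$, valid because a line in direction $\xi$ records the slice of $\widehat f$ along $\eta=\xi^{\perp}$. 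Carrying the $x$-Fourier transform through the back-projection produces a factor $\delta(\langle\xi,y\rangle)$ that localizes the angular integral to the two directions $\xi=\pm\widehat y^{\perp}$, $\eta=\pm\widehat y$, where $\widehat y=y/|y|$. The outcome I expect is the rank-one symbol
\[
\widehat{(N_{k,\ell}f)}_{m_1\dots m_k n_1\dots n_\ell}(y)=\frac{c}{|y|}\,\widehat y^{\perp}_{m_1}\!\cdots \widehat y^{\perp}_{m_k}\,\widehat y_{n_1}\!\cdots \widehat y_{n_\ell}\,\Phi(y),
\]
with the single scalar $\Phi(y)=\big\langle(\widehat y^{\perp})^{\otimes k}\otimes\widehat y^{\otimes\ell},\,\widehat f(y)\big\rangle$: every component of $\widehat{N_{k,\ell}f}$ is the same $\Phi$ times a fixed monomial in $\widehat y,\widehat y^{\perp}$.

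Next I would invoke the decomposition of Proposition \ref{thm: tensor field decomposition} to replace $f$ by $f^{s}$. Since the symbol pairs $\widehat f$ only against $(\widehat y^{\perp})^{\otimes k}\otimes\widehat y^{\otimes\ell}$, the contraction $\Phi$ is unchanged when $f$ is replaced by its divergence-free, trace-free part $f^{s}$: the potential parts (in the ranges of $\D'$ and $\lambda$) are annihilated because they carry a factor $\langle y,\cdot\rangle$ or a trace, each of which pairs to zero against this transverse, trace-free polarization. I would then show that in two dimensions the conditions $\delta'f^{s}=0$ and $\mu f^{s}=0$ force
\[
\widehat{f^{s}}_{i_1\dots i_k j_1\dots j_\ell}(y)=y^{\perp}_{i_1}\!\cdots y^{\perp}_{i_k}\,y_{j_1}\!\cdots y_{j_\ell}\,s(y)
\]
for a single scalar $s$: divergence-freeness makes the (symmetric) first block transverse to $y$, hence proportional to $y^{\perp}$ in every slot, while trace-freeness forces the contraction of the last block against $y^{\perp}$ to vanish, hence the last block is proportional to $y$. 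The decisive algebraic fact throughout is $\langle y,y^{\perp}\rangle=0$, which yields both the divergence-free and the trace-free identities at once.

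Substituting this polarized form, the contraction collapses to $\Phi(y)=|y|^{k+\ell}s(y)$, and the all-ones component becomes $\widehat{(N_{k,\ell}f)}_{1\dots1}(y)=\tfrac{c}{|y|}(\widehat y^{\perp}_1)^{k}(\widehat y_1)^{\ell}\Phi(y)=\tfrac{c(-y_2)^{k}y_1^{\ell}}{|y|}\,s(y)$, so $s$ is recovered from $\widehat{(N_{k,\ell}f)}_{1\dots1}$ by one factor $|y|=(-\Delta)^{1/2}$ and a division by the monomial $y_2^{k}y_1^{\ell}$. Feeding $s$ back into the polarized form, $\widehat{f^{s}}_{m_1\dots m_k n_1\dots n_\ell}$ equals $s$ times $y^{\perp}_{m_1}\cdots y^{\perp}_{m_k}y_{n_1}\cdots y_{n_\ell}$; tallying how many slots in each block carry the index value $2$ (against $\widehat y^{\perp}_1=-y_2/|y|$, $\widehat y_1=y_1/|y|$), together with the monomial $y_2^{k}y_1^{\ell}$ divided out in obtaining $s$, turns the net multiplier into $(-1)^{M_2}|y|\,y_1^{M_2}y_2^{-N_2}$, i.e.\ the operator $(-1)^{M_2}(-\Delta)^{1/2}(-\Delta')^{-N_2}P(D)$ with $M_2,N_2$ as fixed in \eqref{def_M_2_N_2}. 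I would finish by checking that the negative power $(-\Delta')^{-N_2}$ (division by a power of $y_2$) is legitimate: because $\widehat{(N_{k,\ell}f)}_{1\dots1}$ carries the prefactor $y_2^{k}$, it vanishes to sufficient order on the axis $y_2=0$ for the quotient to define a genuine function.

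The main obstacle I anticipate is twofold. The more delicate part is the exact index bookkeeping: pinning down the precise exponents $M_2=M_2(m_1,\dots,m_k)$, $N_2=N_2(n_1,\dots,n_\ell)$ and the sign $(-1)^{M_2}$ by reconciling the monomial produced by the polarization of $f^{s}$ against the monomial $y_2^{k}y_1^{\ell}$ absorbed into $\widehat{(N_{k,\ell}f)}_{1\dots1}$, so that the bookkeeping of both blocks assembles into the single stated operator. The second is justifying the non-local factor $(-\Delta)^{1/2}$ and the negative-order factor $(-\Delta')^{-N_2}$ as bona fide operators on this class of data, which again rests on the vanishing of $\widehat{(N_{k,\ell}f)}_{1\dots1}$ along the coordinate axes inherited from the rank-one structure of the symbol. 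Everything else—the adjoint computation, the slice identity, and the collapse $\Phi=|y|^{k+\ell}s$—is routine once the rank-one symbol is in hand.
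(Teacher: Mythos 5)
Your proposal is correct and follows essentially the same route as the paper: derive the normal operator as a rank-one Fourier multiplier, use the conditions $\delta'f^s=\mu f^s=0$ to reduce $\widehat{f^s}$ to its all-ones component (your polarized form $(y^\perp)^{\otimes k}\otimes y^{\otimes \ell}s(y)$ is exactly the paper's Lemma \ref{lem: relation between components of f} stated invariantly), recover that component as $|y|\,\mathcal{F}[(N_{k,\ell}f)_{1\dots 1}]$ up to a constant, and propagate back to all components. The only differences are presentational — you obtain the symbol via projection--slice localization of the angular integral rather than by Fourier-transforming the physical-space convolution kernel of Lemma \ref{lem;normal_operator} — though note that your final ``net multiplier'' $(-1)^{M_2}|y|\,y_1^{M_2}y_2^{-N_2}$ reproduces the theorem's stated operator but, exactly as in the paper's own last step, does not agree with the multiplier $(-1)^{M_2}|y|\,(y_1/y_2)^{M_2-N_2}$ that the polarized form actually yields (and both drop the constant $b(k,\ell)$), so the bookkeeping you flag as delicate is indeed where care is needed.
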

In the upcoming theorems, $\mathcal{D}' (\mathit{S}^m)$ and $\mathcal{E} ' (\mathit{S}^m)$ denote the space of symmetric $m$-tensor fields whose components are distributions and compactly supported distributions, respectively.
 \begin{theorem}\label{th:ucp_MiRT}
Let $U \subset \FR^2$ be a non-empty open set. Suppose $f \in \mathcal{E}' (\mathit{S}^k \times \mathit{S}^\ell)$ and  $f|_U = \lambda w' $, for some $w'\in \mathcal{E}' (\mathit{S}^{k-1} \times \mathit{S}^{\ell-1}) $ and the mixed ray transform $\mathit{L}_{k, \ell}f$  vanishes on all lines passing through $U$, then there exists $u \in \mathcal{E}' (\mathit{S}^{k-1} \times \mathit{S}^{\ell})$ and $w \in \mathcal{E}' (\mathit{S}^{k - 1} \times \mathit{S}^{\ell - 1})$ such that $f = \D^\prime u + \lambda w$.
 \end{theorem}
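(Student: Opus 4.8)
The plan is to reduce the unique continuation statement for the mixed ray transform to the one already available for the longitudinal ray transform of symmetric tensor fields in \cite{UCP_2022}. The bridge is the pointwise correspondence of \cite{kernel_MRT}: because $\eta=\xi^{\perp}$ depends linearly on $\xi$ through the $90^{\circ}$ rotation $\epsilon$, writing $\eta^{j}=\epsilon^{j}{}_{p}\,\xi^{p}$ in \eqref{eq:mixed ray transform} and symmetrizing over all $k+\ell$ indices produces a symmetric $(k+\ell)$-tensor field $g=\zeta f$ with
\[
    L_{k,\ell}f = I_{k+\ell}\,g = I_{k+\ell}(\zeta f).
\]
Here $\zeta$ rotates the last $\ell$ indices of $f$ by $\epsilon$ and then fully symmetrizes; it is a constant-coefficient, support-preserving fiberwise operator, so it maps $\E'(S^{k}\times S^{\ell})$ into $\E'(S^{k+\ell})$ and commutes with restriction to open sets.

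First I would record the algebraic properties of $\zeta$ (all pointwise in $\R^{2}$, hence valid on compactly supported distributions): (i) $\zeta$ is surjective; (ii) $\zeta\circ\lambda=0$, and since $\lambda$ is injective and, in two dimensions, $\dim\ker\zeta=(k+1)(\ell+1)-(k+\ell+1)=k\ell=\dim(S^{k-1}\times S^{\ell-1})$, the inclusion $\operatorname{image}\lambda\subseteq\ker\zeta$ is an equality, i.e. $\zeta f=0$ if and only if $f=\lambda w$; and (iii) there is an analogous surjective fiberwise map $\zeta'\colon S^{k-1}\times S^{\ell}\to S^{k+\ell-1}$ satisfying the intertwining relation $\zeta\circ\D'=\D\circ\zeta'$, where $\D$ is the inner (symmetrized) derivative. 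As $\zeta'$ is a surjection of finite-dimensional fibers, it has a constant right inverse and is therefore surjective on $\E'$-sections.

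With these in hand the argument is short. Since $\zeta$ is local and $\zeta\circ\lambda=0$, the hypothesis $f|_{U}=\lambda w'$ gives $g|_{U}=\zeta(f|_{U})=0$; in particular $g|_{U}=\D 0$ is a (trivial) potential on $U$. Because $I_{k+\ell}g=L_{k,\ell}f$ vanishes on every line meeting $U$, the longitudinal unique continuation theorem of \cite{UCP_2022} applies and furnishes $h\in\E'(S^{k+\ell-1})$ with $g=\D h$. Choosing, by surjectivity of $\zeta'$, some $u\in\E'(S^{k-1}\times S^{\ell})$ with $\zeta'u=h$, the intertwining relation yields
\[
    \zeta(\D'u)=\D(\zeta'u)=\D h=g=\zeta f,
\]
so $\zeta(f-\D'u)=0$. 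By (ii) there is $w\in\E'(S^{k-1}\times S^{\ell-1})$ with $f-\D'u=\lambda w$, that is $f=\D'u+\lambda w$, which is the assertion.

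The main obstacle is not this final bookkeeping but setting the correspondence up rigorously: one must establish the three facts about $\zeta$ in $\R^{2}$—surjectivity, the exact identification $\ker\zeta=\operatorname{image}\lambda$, and the intertwining $\zeta\D'=\D\zeta'$—and check that every step stays within compactly supported distributions. This is precisely where two-dimensionality is used, as it is what turns $\eta$ into a rotation of $\xi$ and forces the fiber dimensions to match. A secondary point needing care is matching the hypotheses of \cite{UCP_2022}: the assumption $f|_{U}=\lambda w'$ is exactly what feeds the longitudinal theorem its local potential condition, here in the degenerate form $g|_{U}=0$.
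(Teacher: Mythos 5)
Your argument is correct and follows essentially the same route as the paper: both reduce the statement to the longitudinal UCP of \cite{UCP_2022} via the correspondence $L_{k,\ell}f=I_{k+\ell}(\sigma A f)$, observe that $f|_U=\lambda w'$ forces $g|_U=0$, and then transfer the potential $g=\D h$ back to the form $f=\D'u+\lambda w$. The only difference is that you rederive the algebraic facts about $\zeta=\sigma A$ (the exactness $\ker\zeta=\operatorname{im}\lambda$ and the intertwining $\zeta\D'=\D\zeta'$) from scratch, whereas the paper simply invokes the kernel characterization of the mixed ray transform from \cite{kernel_MRT}.
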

We say that a function $\phi$  vanishes to infinite order at a point $x_0 \in \R^2$ if $\phi$ is smooth in a neighborhood of  $x_0$, and the function  $\phi$ along with its partial derivatives of all orders vanishes at $x_0$, that is,  $ \partial^{\alpha} \phi (x_0)=0 $ for all multi-indices $ \alpha$.
\begin{theorem}\label{th_ucp_normal_op}
    Let $U \subset \FR^2$ be a non-empty open set. For some $f \in \mathcal{E}' (\mathit{S}^k \times \mathit{S}^\ell)$, let $f|_U = \lambda w'$, for some $ w'\in\E'(S^{k-1}\times S^{\ell-1}) $, and the normal operator $N_{k, \ell}f$  vanishes to infinite order at some point $x_0\in U$, then there exists $u \in \mathcal{E}' (\mathit{S}^{k-1} \times \mathit{S}^{\ell})$ and $w \in \mathcal{E}' (\mathit{S}^{k - 1} \times \mathit{S}^{\ell - 1})$ such that $f = \D^\prime u + \lambda w$.
\end{theorem}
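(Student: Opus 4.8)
The plan is to reduce Theorem~\ref{th_ucp_normal_op} to the ray-transform version, Theorem~\ref{th:ucp_MiRT}, by transporting the two hypotheses on the normal operator of the mixed ray transform to hypotheses on the longitudinal ray transform, for which unique continuation is already available in \cite{UCP_2022}. The bridge is the correlation of \cite{kernel_MRT}: since in $\R^2$ the vector $\eta=\xi^\perp$ is obtained from $\xi$ by the fixed $90^\circ$ rotation (the Levi--Civita tensor $\epsilon$), the weight $\xi^{\otimes k}\!\otimes\eta^{\otimes\ell}$ equals $(\mathrm{Id}^{\otimes k}\otimes\epsilon^{\otimes\ell})\,\xi^{\otimes(k+\ell)}$, so there is a fixed (direction-independent) linear map $\mathcal R$ rotating the last $\ell$ slots and symmetrizing, for which $g:=\mathcal R f\in\mathcal E'(S^{k+\ell})$ satisfies $L_{k,\ell}f = I_{k+\ell}g$ as distributions on $T\Sb^1$. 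Because $L_{k,\ell}^{*}=\mathcal R\, I_{k+\ell}^{*}$ as maps on functions over $T\Sb^1$, this identity upgrades to the normal operators: $N_{k,\ell}f = \mathcal R\,(I_{k+\ell}^{*}I_{k+\ell}g)$, i.e.\ $N_{k,\ell}f$ is the fixed invertible transformation $\mathcal R$ applied to the normal operator of the longitudinal ray transform of $g$.

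First I would record what the two hypotheses become under this correspondence. Since $\mathcal R$ is a fixed invertible linear change of components with no $x$- or $\xi$-dependence, the assumption that every component of $N_{k,\ell}f$ vanishes to infinite order at $x_0$ is equivalent to every component of $I_{k+\ell}^{*}I_{k+\ell}g$ vanishing to infinite order at $x_0$. Next I would translate the gauge hypothesis $f|_U=\lambda w'$: because $\langle\xi,\xi^\perp\rangle=0$ forces $L_{k,\ell}(\lambda w')=0$, the field $\mathcal R(\lambda w')$ lies in the kernel of $I_{k+\ell}$, which in the compactly supported category (cf.\ \cite{Sharafutdinov_1994,UCP_2022}) consists precisely of potential fields $\D p$. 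Hence on $U$ the symmetric tensor $g$ agrees with a potential field, $g|_U=(\D p')|_U$ for some $p'\in\mathcal E'(S^{k+\ell-1})$.

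With these two facts, $g$ satisfies exactly the hypotheses of the unique continuation theorem for the normal operator of the longitudinal ray transform from \cite{UCP_2022}: its normal operator vanishes to infinite order at the interior point $x_0\in U$, and $g$ coincides with a potential field on the open set $U$. Invoking that result gives $g=\D p$ globally, equivalently $I_{k+\ell}g\equiv 0$, and therefore $L_{k,\ell}f\equiv 0$ on $T\Sb^1$. In particular $L_{k,\ell}f$ vanishes on every line meeting $U$, and $f|_U=\lambda w'$ by assumption, so Theorem~\ref{th:ucp_MiRT} applies and yields $u\in\mathcal E'(S^{k-1}\times S^\ell)$ and $w\in\mathcal E'(S^{k-1}\times S^{\ell-1})$ with $f=\D' u+\lambda w$, which is the desired conclusion.

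The steps requiring genuine care---and which I regard as the main obstacle---are the two transfers in the second paragraph. One must verify at the level of Schwartz kernels that the backprojections defining $N_{k,\ell}$ and $I_{k+\ell}^{*}I_{k+\ell}$ are intertwined by the constant map $\mathcal R$ (this is exactly where $\eta$ being a fixed rotation of $\xi$ is used, so that $\xi^{\otimes k}\!\otimes\eta^{\otimes\ell}$ and $\xi^{\otimes(k+\ell)}$ correspond), and one must justify the kernel/gauge translation $f|_U=\lambda w'\Rightarrow g|_U=\D p'$ in the distributional ($\mathcal E'$) setting, ensuring the potential $p'$ may be taken compactly supported. Both are bookkeeping-heavy but conceptually standard once the correlation of \cite{kernel_MRT} is in place; no analytic input beyond the longitudinal result of \cite{UCP_2022} (which itself rests on the fractional-Laplacian unique continuation underlying \cite{UCP_functions}) is needed.
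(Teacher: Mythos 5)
Your proposal follows essentially the same route as the paper's proof: both reduce the statement to the unique continuation theorem for the normal operator of the longitudinal ray transform from \cite{UCP_2022} via the correspondence $g=\sigma Af$, transferring the infinite-order vanishing hypothesis through the algebraic intertwining of the two normal operators (the paper's Lemma \ref{lem: relationship between normal operators}, which gives $(Ng)_{i_1\dots i_kj_1\dots j_\ell}=(-1)^{J_2}(N_{k,\ell}f)_{i_1\dots i_k\delta(j_1,\dots,j_\ell)}$) and the gauge condition $f|_U=\lambda w'$ through the identity $\sigma A\lambda=0$ (the paper gets $g|_U=0$ directly, slightly more than your $g|_U=(\D p')|_U$). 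The one imprecision to fix is that your $\mathcal R=\sigma A$ is surjective but \emph{not} invertible (its kernel is $\mathrm{Im}\,\lambda$), and the intertwining as you wrote it does not typecheck; the correct statement is $Ng=A(N_{k,\ell}f)$, equivalently $N_{k,\ell}f=\mathcal R^{*}(Ng)$ with $\mathcal R^{*}$ injective, which still yields the needed transfer of infinite-order vanishing because each component of $Ng$ is a signed single component of $N_{k,\ell}f$.
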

The next result is a generalization of the above, indicating that if one knows the tensor field locally, along with its mixed ray transform, then the tensor field can be recovered globally.
\begin{theorem}\label{th_ucp_MiRT_1}
Let $U \subset \FR^2$ be a non-empty open set. For some $f \in \mathcal{E}' (\mathit{S}^k \times \mathit{S}^\ell)$, let $f|_U = \D' u+\lambda w'$ with $u \in \mathcal{E}' (\mathit{S}^{k-1} \times \mathit{S}^{\ell})$ and $w \in \mathcal{E}' (\mathit{S}^{k - 1} \times \mathit{S}^{\ell - 1})$, and the normal operator $N_{k, \ell}f$  vanishes to infinite order at some point $x_0\in U$,  then there exists $\Tilde{u} \in \mathcal{E}' (\mathit{S}^{k-1} \times \mathit{S}^{\ell})$ and $w \in \mathcal{E}' (\mathit{S}^{k - 1} \times \mathit{S}^{\ell - 1})$ such that $f = \D^\prime \Tilde{u} + \lambda w$.
\end{theorem}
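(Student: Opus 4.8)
The plan is to reduce this statement to Theorem \ref{th_ucp_normal_op} by removing the globally defined potential $\D' u$ at the outset. Since $u \in \mathcal{E}'(S^{k-1}\times S^\ell)$ is a compactly supported distribution and $\D'$ is a first order differential operator, the field $\D' u$ again belongs to $\mathcal{E}'(S^k \times S^\ell)$; hence $g := f - \D' u$ is a well-defined element of $\mathcal{E}'(S^k \times S^\ell)$ to which we intend to apply the previous theorem.

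First I would check the local hypothesis. By assumption $f|_U = \D' u + \lambda w'$, so subtracting gives $g|_U = \lambda w'$ with $w' \in \mathcal{E}'(S^{k-1}\times S^{\ell-1})$, which is exactly the local condition required in Theorem \ref{th_ucp_normal_op}.

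Next I would verify that $N_{k,\ell} g$ vanishes to infinite order at $x_0$. The key input is the kernel description from \cite{kernel_MRT}: the range of $\D'$ lies in the kernel of the mixed ray transform, i.e. $L_{k,\ell}(\D' u) = 0$. Indeed, contracting the symmetrized derivative against $\xi^{i_1}\cdots\xi^{i_k}$ turns $\xi^{i_1}\partial_{x^{i_1}}$ into the derivative $\tfrac{d}{dt}$ along the line, and integrating this total derivative over $t$ gives zero for a compactly supported potential; this persists in the distributional setting by the mapping properties of $L_{k,\ell}$ on $\mathcal{E}'$ already used in Theorem \ref{th_ucp_normal_op}. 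Consequently $N_{k,\ell}(\D' u) = L_{k,\ell}^{*} L_{k,\ell}(\D' u) = 0$, and by linearity $N_{k,\ell} g = N_{k,\ell} f - N_{k,\ell}(\D' u) = N_{k,\ell} f$, which by hypothesis vanishes to infinite order at $x_0 \in U$.

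With both hypotheses in hand, Theorem \ref{th_ucp_normal_op} applied to $g$ produces $u_1 \in \mathcal{E}'(S^{k-1}\times S^\ell)$ and $w \in \mathcal{E}'(S^{k-1}\times S^{\ell-1})$ with $g = \D' u_1 + \lambda w$. Unwinding the definition of $g$ then yields $f = \D' u + g = \D'(u + u_1) + \lambda w$, so the conclusion holds with $\tilde{u} := u + u_1 \in \mathcal{E}'(S^{k-1}\times S^\ell)$. The only point requiring genuine care is the identity $N_{k,\ell}(\D' u)=0$ for a compactly supported distribution $u$: it is immediate for smooth compactly supported potentials, and the substance of the argument is to confirm that passing $\D'$ into the kernel of $L_{k,\ell}$ (hence of $N_{k,\ell}$) remains valid distributionally. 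Since this is the same extension already invoked in the proof of Theorem \ref{th_ucp_normal_op}, no new machinery is needed, and the present theorem is essentially a corollary obtained by this translation trick.
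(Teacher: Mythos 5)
Your proof is correct, but it takes a genuinely different route from the paper. The paper does not subtract the potential; instead it passes to $g=\sigma A f$, observes that $f|_U=\D'u+\lambda w'$ forces $g$ to be a potential field on $U$ (equivalently $Rg=0$ on $U$, with $R$ the Saint--Venant operator), notes via Lemma \ref{lem: relationship between normal operators} that $Ng$ vanishes to infinite order at $x_0$, and then invokes the unique continuation theorem for the normal operator of the longitudinal ray transform (\cite{UCP_2022}, Theorem 2.1) in the form that only assumes $Rg=0$ on $U$ rather than $g|_U=0$; finally it translates $g=\D v$ back to $f=\D'\tilde u+\lambda w$. You instead exploit that $u\in\mathcal{E}'(\mathit{S}^{k-1}\times\mathit{S}^{\ell})$ is globally defined, set $g:=f-\D'u\in\mathcal{E}'(\mathit{S}^k\times\mathit{S}^\ell)$, use $N_{k,\ell}(\D'u)=0$ (the identity recorded in the paper's remark after Proposition \ref{thm: tensor field decomposition}, extended to $\mathcal{E}'$ by the routine density argument the paper itself licenses) to conclude $N_{k,\ell}g=N_{k,\ell}f$, and then apply Theorem \ref{th_ucp_normal_op} directly. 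What your approach buys is economy: the theorem becomes a literal corollary of Theorem \ref{th_ucp_normal_op}, and you never need the stronger ``$Rg=0$ on $U$'' version of the external UCP result. What the paper's approach buys is uniformity with its proof of Theorem \ref{th_ucp_normal_op} and no need to justify $N_{k,\ell}\circ\D'=0$ distributionally. The one point you should make explicit is that the hypothesis $f|_U=\D'u+\lambda w'$ is an equality of restrictions of globally defined distributions, so that $g|_U=(\lambda w')|_U$ is exactly the local hypothesis of Theorem \ref{th_ucp_normal_op}; with that reading (which the statement's membership $u\in\mathcal{E}'$ supports), your argument is complete.
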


We now state the range characterization of the mixed ray transform in two dimensions in the Schwartz space. Let $\zeta = \xi_1+\I \xi_2 \in \mathbb{C}, p\in \R$, and   $ \mathcal{S}(S^k\times S^{\ell}) \subset C^{\infty}(S^k\times S^{\ell}) $ be the Schwartz space of tensor fields of order $(k+\ell)$ which are symmetric in first $k$ and last $\ell$ indices. The Schwartz space $\mathcal{S}(T\Sb^1) \subset C^\infty(T\Sb^1)$ is also defined in the same way with the understanding that the decays and all the derivatives are with respect to the first component. 

\begin{theorem}\label{th:range_characterization}
Let $ \phi \in \mathcal{S}(T\Sb^1)$ and $ k,\ell$ be two non-negative integers. Then $\phi $ belongs to the range of the operator 
\begin{align}
     f \in \mathcal{S}(S^k\times S^{\ell}) \mapsto L_{k,\ell}f \in \mathcal{S}(T\Sb^1) 
\end{align}
if and only if the following two conditions are satisfied
\begin{itemize}
    \item [(i)] $\phi(\I p \zeta, -\zeta)= (-1)^{k+\ell} \phi(\I p \zeta,\zeta)  $.
    \item [(ii)] $ \int_{\R} p^r \phi(\I p \zeta,\zeta) dp =P_{k\ell r}(\zeta) $, \quad \mbox{for}\quad $ |\zeta|=1$,
\end{itemize}
where $P_{k\ell r}$ is a homogeneous polynomial of degree $ k+\ell+r$ in $ (\zeta, \Bar{\zeta}).$
\end{theorem}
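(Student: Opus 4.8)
The plan is to reduce everything to the Fourier slice (projection--slice) theorem for the mixed ray transform and then read conditions (i) and (ii) off as, respectively, a parity constraint and a smoothness--at--the--origin constraint on the reconstructed Fourier transform of $f$. Throughout I identify a point on the line $\{p\eta+t\xi\}$ with the complex number $\I p\zeta$ using $\eta=\I\zeta$, write $\zeta=e^{\I\theta}$ on the unit circle, and let $\tilde\phi(\sigma,\zeta)$ be the one--dimensional Fourier transform of $p\mapsto\phi(\I p\zeta,\zeta)$. The basic identity I would establish first is
$$\tilde\phi(\sigma,\zeta)=\widehat{f}_{i_1\cdots i_k j_1\cdots j_\ell}(\sigma\eta)\,\xi^{i_1}\cdots\xi^{i_k}\eta^{j_1}\cdots\eta^{j_\ell},$$
obtained from \eqref{eq:mixed ray transform} by the change of variables $y=p\eta+t\xi$ (so that $p=\langle y,\eta\rangle$ and $\D t\,\D p=\D y$, the frame $\{\xi,\eta\}$ being orthonormal). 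This says that the one--dimensional transform of $\phi$ equals the contraction of $\widehat f$, restricted to the frequency ray $\R\eta$, against $\xi^{\otimes k}\otimes\eta^{\otimes\ell}$.

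For necessity, condition (i) is immediate: keeping the point $\I p\zeta$ fixed and replacing the direction $\xi$ by $-\xi$ leaves the line and the integral in \eqref{eq:mixed ray transform} unchanged but produces the factor $(-1)^{k+\ell}$ from the $k$ copies of $\xi$ and the $\ell$ copies of $\eta=\xi^{\perp}$, which is exactly the stated parity. For condition (ii) I would integrate the slice identity against $p^{r}$; the same change of variables gives
$$\int_{\R}p^{r}\,\phi(\I p\zeta,\zeta)\,\D p=\left(\int_{\R^{2}}\langle y,\eta\rangle^{r}f_{i_1\cdots i_k j_1\cdots j_\ell}(y)\,\D y\right)\xi^{i_1}\cdots\xi^{i_k}\eta^{j_1}\cdots\eta^{j_\ell}.$$
Since $\langle y,\eta\rangle$, each $\xi^{i}$, and each $\eta^{j}$ are linear in $(\zeta,\bar\zeta)$, the right--hand side is a homogeneous polynomial of degree $k+\ell+r$ in $(\zeta,\bar\zeta)$, which is the claimed $P_{k\ell r}$. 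That $\phi\in\mathcal S(T\Sb^1)$ whenever $f\in\mathcal S(S^k\times S^\ell)$ is the standard mapping property of the transform, which I would record at the outset.

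For sufficiency I would run the slice identity backwards. Given $\phi$ satisfying (i)--(ii), define $\tilde\phi(\sigma,\zeta)$ as above and manufacture a symmetric field $\widehat f$ on the frequency plane whose contraction along each ray $\R\eta$ reproduces $\tilde\phi$. Away from the origin this is unobstructed: because $\{\xi,\eta\}$ is orthonormal, any distribution of $\tilde\phi$ among the $(k+1)(\ell+1)$ components of $\widehat f$ consistent with the single contraction equation will do, and the parity condition (i) guarantees that the assignments made via $(\sigma,\zeta)$ and via $(-\sigma,-\zeta)$ agree, i.e.\ that $\widehat f$ is single valued. The Schwartz decay of $\widehat f$ then follows from the smoothness of $\phi$ in $p$ (rapid decay of $\tilde\phi$ in $\sigma$) together with the smooth dependence on $\zeta$; after inverting the Fourier transform I obtain $f\in\mathcal S(S^k\times S^\ell)$ and recover $L_{k,\ell}f=\phi$ from the slice identity.

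The main obstacle is the smoothness of $\widehat f$ \emph{across the frequency origin}; it is also why the naive choice $\widehat f(y)=\tilde\phi(|y|,\zeta)\,\xi^{\otimes k}\otimes\eta^{\otimes\ell}$ fails, since the angular factor is homogeneous of degree $0$ and hence discontinuous at $y=0$ once $k+\ell\ge1$. I would instead expand $\tilde\phi(\sigma,\cdot)$ into angular Fourier modes $e^{\I n\theta}$ (equivalently pass to the complex null frame in which $\xi$ and $\eta$ contract to monomials in $\zeta,\bar\zeta$). A smooth symmetric field whose contraction reproduces $\tilde\phi$ exists precisely when each mode--$n$ radial coefficient of $\tilde\phi$ vanishes at $\sigma=0$ to order at least $|n|-(k+\ell)$, because contracting against $\xi^{\otimes k}\otimes\eta^{\otimes\ell}$ can transfer at most $k+\ell$ units of angular frequency between $\widehat f$ and its slice data. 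Condition (ii) supplies exactly this: as $\partial_\sigma^{r}\tilde\phi(0,\cdot)$ equals a homogeneous polynomial of degree $k+\ell+r$ in $(\zeta,\bar\zeta)$, its mode--$n$ part vanishes whenever $|n|>k+\ell+r$, so the mode--$n$ coefficient of $\tilde\phi$ vanishes to order at least $|n|-(k+\ell)$ in $\sigma$, with the parity dictated by (i). Carrying out this distribution of modes so that the components assemble into a field of symmetry type $S^k\times S^\ell$, and verifying the Schwartz bounds uniformly in the modes, is the technical heart of the proof; the complex--variable formalism built on $\zeta=\xi_1+\I\xi_2$ is what keeps the bookkeeping manageable.
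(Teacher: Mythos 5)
Your necessity argument is correct and essentially the standard one: the parity of the integrand under $\xi\mapsto-\xi$ gives (i), and the moment computation via the change of variables $y=p\eta+t\xi$ gives (ii); this matches what the paper does (the paper simply observes that these are the same computations as for the ray transform, via $L_{k,\ell}f=I_{k+\ell}(\sigma A f)$).

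For sufficiency, however, you have taken a genuinely different and much more demanding route, and the hard step is not actually carried out. You propose to build $\widehat f$ directly from the slice data $\tilde\phi(\sigma,\zeta)$, expanding in angular modes and checking that condition (ii) forces the mode-$n$ radial coefficient to vanish to order $|n|-(k+\ell)$ at $\sigma=0$. Your mode counting is right, but what remains --- choosing, for every $n$, a distribution of that mode among the $(k+1)(\ell+1)$ components of a field of symmetry type $S^k\times S^\ell$ so that the resulting $\widehat f$ is smooth at the origin and Schwartz, and so that the single contraction identity is satisfied --- is precisely the content of the Gelfand--Helgason--Ludwig/Pantyukhina theorem, here with the extra complication of the mixed symmetry type. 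You have named this as ``the technical heart'' but not supplied it, so as written the sufficiency direction is a program rather than a proof.

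The paper sidesteps this entirely with an algebraic reduction. Since conditions (i)--(ii) coincide with the known range characterization of the ray transform $I_{k+\ell}$ on $\mathcal{S}(S^{k+\ell})$ in $\R^2$ (Pantyukhina), one gets a fully symmetric $g\in\mathcal{S}(S^{k+\ell})$ with $I_{k+\ell}g=\phi$ for free. Setting $f=A^{-1}g$ (with $A$ the purely algebraic sign-and-flip operator \eqref{def_of_A}, $A^{-1}=(-1)^\ell A$) gives $\sigma A f=\sigma g=g$, hence $L_{k,\ell}f=I_{k+\ell}(\sigma Af)=I_{k+\ell}g=\phi$, and $f$ automatically lies in $\mathcal{S}(S^k\times S^\ell)$ because $g$ is fully symmetric; the only thing left to check is the compatibility of $Af$ with full symmetry, which the paper does by showing $f=A^{-1}g$ is trace-free. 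If you want to salvage your construction, the cleanest fix is to insert exactly this reduction: prove the slice identity for $I_{k+\ell}$ of $g=\sigma Af$ rather than for $L_{k,\ell}f$ componentwise, quote the scalar/symmetric-tensor range theorem for the existence of $g$, and then conjugate by $A$. Otherwise you owe the reader the full mode-by-mode construction of $\widehat f$, including the verification that the chosen components assemble into the correct symmetry type and satisfy uniform Schwartz estimates.
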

\begin{remark}
The MiRT defined above can be extended to the space $\mathbb{R}^2 \times \mathbb{R}^2\setminus \{0\}$. Let us denote this extended operator by $\widetilde{\mathit{L}_{k, \ell}}f$. The operators $\widetilde{\mathit{L}_{k, \ell}}f$ and $\mathit{L}_{k, \ell}f$ are equivalent which is evident from the below expression:
\[\widetilde{\mathit{L}_{k, \ell}}f (x, \xi) = |\xi|^{k + \ell - 1} \mathit{L}_{k, \ell}f \left(x, \frac{\xi}{|\xi|}\right).\]
The above expression is similar to \cite{Sharafutdinov_1994}*{Equation 2.10.8}, and it shows that knowing one of the operators gives all the information about the other operator. Hence, we will not distinguish between $\widetilde{\mathit{L}_{k, \ell}}f$ and $\mathit{L}_{k, \ell}f$, and just use the notation $\mathit{L}_{k, \ell}f$ in the further discussion.
\end{remark}

\section{Proof of main results}\label{sec:proof_of_main_results}
The aim of this section is to prove the main results stated in the preceding section. To this end, we define the  linear operator $\textit{A}: C^\infty (\mathit{S}^k \times \mathit{S}^\ell) \rightarrow C^\infty (\mathit{S}^k \times \mathit{S}^\ell)$ as defined in \cite{kernel_MRT}:
\begin{align}\label{def_of_A}
    \textit{A}f_{i_1 \dots i_k j_1 \dots j_\ell} = {(- 1)}^{\ell - N(j_1, \dots, j_\ell)} f_{i_1 \dots i_k \delta(j_1, \dots, j_\ell)},
\end{align}
where $N(j_1, \dots, j_\ell)$ is the number of 1's in $(j_1, \dots, j_\ell)$ and $\delta$ maps 1's in $(j_1, \dots, j_\ell)$ to 2's and vice versa. So $\delta(j_1, \dots, j_\ell)$ is the tuple obtained from applying $\delta$ to $(j_1, \dots, j_\ell)$.

All the results presented in this article can be demonstrated in the context of distribution by employing routine density arguments. However, for the sake of clarity and better readability, we opt to confine our analysis to the smooth case setting only.

\subsection{Inversion formula}\label{sec: inversion formula}
The aim of this section is to first derive an explicit formula for the normal operator of MiRT over $(k+\ell)$-tensor fields and then use the obtained formula to proof of Theorem \ref{th:inversion_normal_ope}. To be more specific, we show that the normal operator of MiRT can be used to explicitly recover the componentwise Fourier transform of the solenoidal part of the unknown tensor field. That can be used to get the solenoidal part of the unknown tensor field by Fourier inversion. To achieve this, we follow the idea of Sharafutdinov \cite{Sharafutdinov_1994}*{Chapter 2} in the case of LRT.

\begin{lemma}\label{lem;normal_operator}
Let $ f\in C_c^{\infty}(S^k \times S^\ell)$, the normal operator $ N_{k,\ell}$ of the mixed ray transform is given by 
\begin{align}\label{Normal operator MRT}
({\textit{N}_{k,\ell} f})_{i_1 \dots i_k j_1 \dots j_\ell} (x) = 2 f_{m_1 \dots m_k n_1 \dots n_\ell} (x)  *  {(- 1)}^{(J_1 + N_1)} \quad \frac{x_1^{(I_1 + J_2 + M_1 + N_2)} x_2^{(I_2 + J_1 + M_2 + N_1)}}{|x|^{2k + 2\ell+ 1}},
\end{align}
where $I_i, J_i, M_i, N_i$ are defined in equation \eqref{def_M_2_N_2}.
\end{lemma}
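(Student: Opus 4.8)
The plan is to compute the normal operator directly as $N_{k,\ell}=L_{k,\ell}^{*}L_{k,\ell}$ in the spatial domain, following the scheme Sharafutdinov uses for the longitudinal ray transform in \cite{Sharafutdinov_1994}*{Chapter 2}. First I would determine the formal adjoint $L_{k,\ell}^{*}$ for the natural $L^2$-pairings on $T\mathbb{S}^1$ and on $\R^2$. Parametrizing a point of $T\mathbb{S}^1$ lying over $\xi$ as $s\eta$ with $\eta=\xi^\perp$, and splitting a general plane point as $y=s\eta+t\xi$ (an orthonormal change of variables, so $ds\,dt=dy$), the pairing $\langle L_{k,\ell}f,\psi\rangle$ unfolds into an integral over $\R^2\times\mathbb{S}^1$ and yields
\begin{equation}
(L_{k,\ell}^{*}\psi)_{i_1\dots i_k j_1\dots j_\ell}(y)=\int_{\mathbb{S}^1}\xi^{i_1}\cdots\xi^{i_k}\,\eta^{j_1}\cdots\eta^{j_\ell}\,\psi\bigl(y-\langle y,\xi\rangle\xi,\ \xi\bigr)\,d\xi .
\end{equation}

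Next I would compose the two operators. Inserting the defining integral of $L_{k,\ell}f$, using $y-\langle y,\xi\rangle\xi+t\xi=y+(t-\langle y,\xi\rangle)\xi$, and substituting $\tau=t-\langle y,\xi\rangle$, the back-projection collapses the offset and gives
\begin{equation}
(N_{k,\ell}f)_{i_1\dots i_k j_1\dots j_\ell}(y)=\int_{\mathbb{S}^1}\int_{-\infty}^{\infty}f_{m_1\dots m_k n_1\dots n_\ell}(y+\tau\xi)\,\xi^{i_1}\cdots\xi^{i_k}\eta^{j_1}\cdots\eta^{j_\ell}\,\xi^{m_1}\cdots\xi^{m_k}\eta^{n_1}\cdots\eta^{n_\ell}\,d\tau\,d\xi .
\end{equation}
The key reduction is to convert this iterated integral over $\mathbb{S}^1\times\R$ into a convolution on $\R^2$. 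Setting $z=\tau\xi$, the map $(\xi,\tau)\mapsto z$ is two-to-one onto $\R^2\setminus\{0\}$ with polar Jacobian $d\tau\,d\xi=|z|^{-1}\,dz$; since the angular weight has even total degree $2(k+\ell)$, the integrand is invariant under $(\xi,\tau)\mapsto(-\xi,-\tau)$, so the two sheets contribute equally and produce the overall factor $2$. The same parity makes the resulting kernel an even function of $z$, so the expression is a genuine convolution $(N_{k,\ell}f)_{ij}=2\,f_{mn}*K_{ijmn}$, where $K_{ijmn}(z)$ is the angular weight evaluated at $\xi=z/|z|$ divided by $|z|$.

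The remaining step, which is the only real obstacle and is purely combinatorial bookkeeping, is to expand this kernel explicitly. Writing $\xi=z/|z|$ gives $\xi^{i_1}\cdots\xi^{i_k}=z_1^{I_1}z_2^{I_2}/|z|^{k}$, where $I_1,I_2$ count the $1$'s and $2$'s among $(i_1,\dots,i_k)$, and similarly for the $m$-indices; for the $\eta$-factors one uses $\eta=(-\xi_2,\xi_1)$, i.e. $\eta_1=-z_2/|z|$ and $\eta_2=z_1/|z|$, so that $\eta^{j_1}\cdots\eta^{j_\ell}=(-1)^{J_1}z_2^{J_1}z_1^{J_2}/|z|^{\ell}$ and likewise for the $n$-indices, the signs $(-1)^{J_1}$ and $(-1)^{N_1}$ being exactly the source of the prefactor $(-1)^{J_1+N_1}$. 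Collecting the powers of $z_1$ and $z_2$ and noting $|z|^{-1}\cdot|z|^{-(k+\ell)}\cdot|z|^{-(k+\ell)}=|z|^{-(2k+2\ell+1)}$ yields precisely the stated kernel, with $x_1$-exponent $I_1+J_2+M_1+N_2$ and $x_2$-exponent $I_2+J_1+M_2+N_1$. I would also record that this kernel is homogeneous of degree $-1$ and hence locally integrable in $\R^2$, so the convolution is well defined for $f\in C_c^\infty(S^k\times S^\ell)$, and that the identity extends to $\mathcal{E}'$ by the density argument already invoked in the paper.
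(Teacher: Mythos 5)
Your proposal is correct and follows essentially the same route as the paper: compute the formal $L^2$-adjoint via the orthogonal splitting $y=s\eta+t\xi$, compose to get the back-projection integral over $\mathbb{S}^1\times\R$, use the even total angular degree $2(k+\ell)$ to obtain the factor $2$ and pass to the convolution kernel via the polar change of variables, and then carry out the same combinatorial bookkeeping with $\eta=(-\xi_2,\xi_1)$ to produce the sign $(-1)^{J_1+N_1}$ and the exponents $I_1+J_2+M_1+N_2$ and $I_2+J_1+M_2+N_1$. The only (harmless) differences are organizational: you realize the factor $2$ through the two-to-one map $(\xi,\tau)\mapsto\tau\xi$ rather than by first folding $t$ to $[0,\infty)$, and you explicitly record the evenness and local integrability of the degree $-1$ kernel, which the paper leaves implicit.
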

\begin{proof}
We start with deriving the expression for the formal $\textit{L}^2-$ adjoint $({\mathit{L}_{k, \ell}})^*$ of $\mathit{L}_{k, \ell}$. For $f \in C_c^\infty (\mathit{S}^k \times \mathit{S}^\ell)$ and $g \in C^\infty (T\mathbb{S}^{1})$, consider
\begin{align*}
    \left<f, ({\mathit{L}_{k, \ell}})^* g\right>_{\FR^2} &= \left<\mathit{L}_{k, \ell} f, g \right>_{T\mathbb{S}^{1}}\\
    &= \int_{\mathbb{S}^{1}} \int_{\xi^\perp} \mathit{L}_{k, \ell} f (x, \xi) g(x, \xi) \,dx \,dS_{\xi}\\
    &= \int_{\mathbb{S}^{1}} \int_{\xi^\perp} \left\{\int_{-\infty}^\infty f_{i_1 \dots i_k j_1 \dots j_\ell} (x + t \xi) \xi^{i_1} \cdots \xi^{i_k} \eta^{j_1} \cdots \eta^{j_\ell} \,dt\right\} g(x, \xi) \,dx \,dS_{\xi}\\
    &= \int_{\mathbb{S}^{1}} \int_{\FR^2} f_{i_1 \dots i_k j_1 \dots j_\ell} (z) \xi^{i_1} \cdots \xi^{i_k} \eta^{j_1} \cdots \eta^{j_\ell} g(z - \left<z, \xi\right> \xi, \xi) \,dz \,dS_{\xi}.
\end{align*}
Here $dS_{\xi}$ denotes surface measure on $\mathbb{S}^1$ and we used the change of variables $x + t \xi = z$ for $x \in \xi^\perp$ and $t \in \FR$ for each fixed $\xi \in \mathbb{S}^{1}$. Further, interchanging the order of integration gives 
$$\left<f, ({\mathit{L}_{k, \ell}})^* g\right>_{\FR^2} = \int_{\FR^2} f_{i_1 \dots i_k j_1 \dots j_\ell} (z) \left\{\int_{\mathbb{S}^{1}} \xi^{i_1} \cdots \xi^{i_k} \eta^{j_1} \cdots \eta^{j_\ell} g(z - \left<z, \xi\right> \xi, \xi) \,dS_{\xi} \right\} \,dz.$$
 Hence, the $\textit{L}^2$ adjoint $({\mathit{L}_{k, \ell}})^*$ of $f$ is given by
\begin{equation}
    (({\mathit{L}_{k, \ell}})^* f)_{i_1 \dots i_k j_1 \dots j_\ell} (x) = \int_{\mathbb{S}^{1}} \xi^{i_1} \cdots \xi^{i_k} \eta^{j_1} \cdots \eta^{j_\ell} g(x - \left<x, \xi\right> \xi, \xi) \,dS_{\xi}.
\end{equation}
Using this adjoint operator, we can extend the definition of MiRT for tensor fields whose components are compactly supported distributions. 

Then the operators ($L_{k, \ell}$ and ${(L_{k, \ell})}^*$) defined above can be extended in the distribution setting by replacing $C^\infty (\mathit{S}^m)$ with $\mathcal{D}' (\mathit{S}^m)$ and $C_c^\infty (\mathit{S}^m)$ with $\mathcal{E} ' (\mathit{S}^m)$, that is, $\mathit{L}_{k, \ell}: \mathcal{E}' (\mathit{S}^k \times \mathit{S}^\ell) \rightarrow \mathcal{D}' (T\mathbb{S}^{1})$ is given by:
\[\left<\mathit{L}_{k, \ell} f, g \right> = \left< f, ({\mathit{L}_{k, \ell}})^* g\right>\]
for $f \in \mathcal{E}' (\mathit{S}^k \times \mathit{S}^\ell)$ and $g \in C_c^\infty (T\mathbb{S}^{1})$.
\noindent Further, let $\textit{N}_{k,\ell} = ({\mathit{L}_{k, \ell}})^* \mathit{L}_{k, \ell}: C_c^\infty (\mathit{S}^k \times \mathit{S}^\ell) \rightarrow C^\infty (\mathit{S}^k \times \mathit{S}^\ell)$ be the normal operator of the MiRT of a symmetric 2-dimensional $m$-tensor field, then
\begin{align*}
    ({\textit{N}_{k,\ell} f})_{i_1 \dots i_k j_1 \dots j_\ell} (x) &= ({\mathit{L}_{k, \ell}}^* \mathit{L}_{k, \ell} f)_{i_1 \dots i_k j_1 \dots j_\ell} (x)\\
    &= \int_{\mathbb{S}^{1}} \xi^{i_1} \cdots \xi^{i_k} \eta^{j_1} \cdots \eta^{j_\ell} \mathit{L}_{k, \ell} f(x - \left<x, \xi\right> \xi, \xi) \,dS_{\xi}.
\end{align*}
 Since $\mathit{L}_{k, \ell} f(x + t\xi, \xi) = \mathit{L}_{k, \ell} f(x, \xi)$, we get
\begin{align*}
    ({\textit{N}_{k,\ell} f})_{i_1 \dots i_k j_1 \dots j_\ell} (x) &= \int_{\mathbb{S}^{1}} \xi^{i_1} \cdots \xi^{i_k} \eta^{j_1} \cdots \eta^{j_\ell} \mathit{L}_{k, \ell} f(x, \xi) \,dS_{\xi}\\
    &= \int_{\mathbb{S}^{1}} \xi^{i_1} \cdots \xi^{i_k} \eta^{j_1} \cdots \eta^{j_\ell} \left\{ \int_{-\infty}^\infty f_{m_1 \dots m_k n_1 \dots n_\ell} (x + t \xi) \xi^{m_1} \cdots \xi^{m_k} \eta^{n_1} \cdots \eta^{n_\ell} \,dt \right\} \,dS_{\xi}\\
    &= 2 \int_{\mathbb{S}^{1}} \int_0^\infty f_{m_1 \dots m_k n_1 \dots n_\ell} (x + t \xi) \xi^{i_1} \cdots \xi^{i_k} \xi^{m_1} \cdots \xi^{m_k} \eta^{j_1} \cdots \eta^{j_\ell} \eta^{n_1} \cdots \eta^{n_\ell} \,dt \,dS_{\xi}.
\end{align*}
Inserting $\eta = \xi^\perp = (-\xi_2, \xi_1)$ in the above equation, we obtain
\begin{align*}
   & ({\textit{N}_{k,\ell} f})_{i_1 \dots i_k j_1 \dots j_\ell} (x) \\&\quad = 2 \int_{\mathbb{S}^{1}} \int_0^\infty {(- 1)}^{N(j_1, \dots, j_\ell)} {(- 1)}^{N(n_1, \dots, n_\ell)} f_{m_1 \dots m_k n_1 \dots n_\ell} (x + t \xi)\\
    &\qquad\quad {\xi_1}^{N(i_1, \dots, i_k, m_1, \dots, m_k)} {\xi_2}^{2k - N(i_1, \dots, i_k, m_1, \dots, m_k)}{\xi_1}^{2\ell - N(j_1, \dots, j_\ell, n_1, \dots, n_\ell)} {\xi_2}^{N(j_1, \dots, j_\ell, n_1, \dots, n_\ell)} \,dt \,dS_{\xi}.
\end{align*}
Performing the change of variables $x + t \xi = y$, we get $t = |x - y|$ and $\xi = -\frac{\left(x - y\right)}{|x - y|}$. For $x - y = \left({(x - y)}_1, {(x - y)}_2\right)$, we get
\begin{align*}
    ({\textit{N}_{k,\ell} f})_{i_1 \dots i_k j_1 \dots j_\ell} (x) &= 2 {(- 1)}^{N(j_1, \dots, j_\ell)+N(n_1,\cdots,n_{\ell})} \int_{\FR^2} \frac{f_{m_1 \dots m_k n_1 \dots n_\ell} (y)}{{|x - y|}^{2k + 2\ell+ 1}}\\& \left\{  {{(x - y)}_1}^{2l + N(i_1, \dots, i_k, m_1, \dots, m_k) - N(j_1, \dots, j_\ell, n_1, \dots, n_\ell)} \right.\\
    &\qquad \left. {{(x - y)}_2}^{2k + N(j_1, \dots, j_\ell, n_1, \dots, n_\ell) - N(i_1, \dots, i_k, m_1, \dots, m_k)} \right\} \,dy.
\end{align*}
Next, we define some shorthand notations to simplify the above computation.

\begin{equation}\label{def_M_2_N_2}
\begin{aligned}
I_1 &= I_1 (i_1, \dots, i_k) = N (i_1, \dots, i_k), &&I_2 &&= I_2 (i_1, \dots, i_k) = k - N (i_1, \dots, i_k)\\
J_1 &= J_1 (j_1, \dots, j_\ell) = N (j_1, \dots, j_\ell), &&J_2 &&= J_2 (j_1, \dots, j_\ell) = \ell - N (j_1, \dots, j_\ell)\\
M_1 &= M_1 (m_1, \dots, m_k) = N (m_1, \dots, m_k), &&M_2 &&= M_2 (m_1, \dots, m_k) = k - N (m_1, \dots, m_k)\\
N_1 &= N_1 (n_1, \dots, n_\ell) = N (n_1, \dots, n_\ell), &&N_2 &&= N_2 (n_1, \dots, n_\ell) = \ell - N (n_1, \dots, n_\ell).
\end{aligned}
\end{equation}

 Using these notations, we rewrite the normal operator of MiRT as:
\begin{equation} \label{eq: normal operator}
({\textit{N}_{k,\ell} f})_{i_1 \dots i_k j_1 \dots j_\ell} (x) = 2\,{(- 1)}^{(J_1 + N_1)}\, f_{m_1 \dots m_k n_1 \dots n_\ell} (x)  *  \frac{x_1^{(I_1 + J_2 + M_1 + N_2)} x_2^{(I_2 + J_1 + M_2 + N_1)}}{|x|^{2k + 2\ell+ 1}}.
\end{equation}
The equation for the normal operator of MiRT is similar to the one derived in \cite[Equation 2.11.3]{Sharafutdinov_1994} for the operator LRT. This equation makes sense even for $f \in \mathcal{E}' (\mathit{S}^k \times \mathit{S}^\ell)$, hence the normal operator $\textit{N}_{k,\ell}$ can be extended to the space of distributions $\textit{N}_{k,\ell} : \mathcal{E}' (\mathit{S}^k \times \mathit{S}^\ell) \rightarrow \mathcal{D}' (\mathit{S}^k \times \mathit{S}^\ell)$.   
\end{proof}

\begin{proposition} \label{thm: tensor field decomposition}
    For any tensor field $f \in C_c^\infty (\mathit{S}^k \times \mathit{S}^\ell)$, there exists $f^s \in C^\infty (\mathit{S}^k \times \mathit{S}^\ell),\  u \in C^\infty (\mathit{S}^{k-1} \times \mathit{S}^{\ell})$ and $w \in C^\infty (\mathit{S}^{k - 1} \times \mathit{S}^{\ell - 1})$ such that:
\begin{equation}\label{tensor field decomposition}
   f = f^s + \D^\prime u + \lambda w,
\end{equation}
where, $\delta'f^s = \mu f^s = 0, \ \mu u = 0$, and $f^s,u \rightarrow 0$ as $|x| \rightarrow \infty$.
\end{proposition}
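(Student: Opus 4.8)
The plan is to pass to the Fourier side, where every operator entering the decomposition becomes algebraic in the dual variable, and then to solve the resulting finite-dimensional linear system fibrewise in frequency. Since $f\in C_c^\infty(S^k\times S^\ell)$, its componentwise Fourier transform $\wh f$ is Schwartz (indeed entire, by Paley--Wiener). Under the Fourier transform the relevant operators acquire the symbols $\wh{\D' u}(y)=\I\,\sigma(i_1,\dots,i_k)\,y_{i_1}\,\wh u_{i_2\dots i_k j_1\dots j_\ell}(y)$ and $\wh{\delta' g}(y)=\I\,y^{i_1}\wh g_{i_1\dots}(y)$, while $\lambda$ and $\mu$ are constant-coefficient and retain their purely algebraic form. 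Applying $\delta'$ and $\mu$ to the desired identity $f=f^s+\D'u+\lambda w$ and imposing $\delta' f^s=\mu f^s=0$ reduces the problem to solving, for each fixed $y\neq 0$, the coupled system
\begin{equation}
\begin{pmatrix}\delta'\D' & \delta'\lambda\\[2pt] \mu\D' & \mu\lambda\end{pmatrix}\begin{pmatrix}\wh u\\[2pt] \wh w\end{pmatrix}=\begin{pmatrix}\wh{\delta' f}\\[2pt] \wh{\mu f}\end{pmatrix},
\end{equation}
subject to the constraint $\mu\wh u=0$; once $(\wh u,\wh w)$ is found, setting $\wh{f^s}:=\wh f-\wh{\D'u}-\wh{\lambda w}$ makes $f^s$ satisfy both $\delta' f^s=0$ and $\mu f^s=0$ automatically, since the two rows of the system are exactly the statements $\wh{\delta' f^s}=0$ and $\wh{\mu f^s}=0$.

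It is worth stressing why the decomposition cannot be carried out by the two obvious successive steps (first strip off the trace via $\lambda$, then the divergence via $\D'$, or vice versa): the operators $\mu$ and $\D'$, equivalently $\delta'$ and $\lambda$, do not commute, so removing the divergence reintroduces trace and removing the trace reintroduces divergence. The coupled system is precisely the device that reabsorbs each spurious contribution into the other term, and this is why one is forced to invert the full matrix operator rather than its diagonal blocks. The system is of mixed (Douglis--Nirenberg) type: the block $\mu\lambda$ is algebraic and homogeneous of degree $0$, the off-diagonal blocks $\delta'\lambda$ and $\mu\D'$ are first order, and $\delta'\D'$ is second order. My first concrete task is to prove that $\mu\lambda\colon \mathit S^{k-1}\times \mathit S^{\ell-1}\to \mathit S^{k-1}\times \mathit S^{\ell-1}$ is an isomorphism --- a purely combinatorial computation with the symmetrized Kronecker deltas --- which lets me eliminate $\wh w$ via $\wh w=(\mu\lambda)^{-1}\bigl(\wh{\mu f}-\mu\D'\wh u\bigr)$ and reduce to the single second-order symbol $\delta'\D'-\delta'\lambda(\mu\lambda)^{-1}\mu\D'$ acting on the trace-free subspace $\ker\mu$.

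Granting fibrewise invertibility, the recovery and decay assertions follow from homogeneity bookkeeping. The entries of the symbol matrix are homogeneous in $y$, hence so are the entries of its inverse: the multipliers producing $\wh u$ carry negative homogeneity (order $-1$ to $-2$) and those producing $\wh w$ carry homogeneity $0$, all smooth on $\FR^2\setminus\{0\}$. Multiplying the Schwartz data $\wh{\delta' f},\wh{\mu f}$ by these multipliers and inverting the Fourier transform yields $u$ and $w$. Because $\wh{\delta' f}(y)=\I\,y^{i}\wh f_{i\cdots}(y)$ vanishes at $y=0$, the potentially singular multipliers act on data that already vanishes there, so $\wh u$ stays locally integrable and $u\in C^\infty$ with $u\to 0$ as $|x|\to\infty$ (a homogeneous-degree-$0$ multiplier, smooth away from the origin, sends Schwartz functions to smooth functions decaying at infinity, and negative-order multipliers improve this). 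Then $f^s=f-\D'u-\lambda w$ is the difference of a compactly supported field and decaying fields, hence smooth with $f^s\to 0$, and $\mu u=0$ holds by the imposed constraint. This reproduces, for the mixed structure, Sharafutdinov's solenoidal decomposition $f={}^{s}\!f+\D v$.

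The main obstacle is the fibrewise invertibility of the mixed-order symbol matrix for every $y\neq 0$, together with the verification that the constraint $\mu\wh u=0$ is consistent (so that the constrained system is uniquely solvable and the dimension count matches). I would handle this by the elimination just described, reducing everything to showing that the corrected second-order symbol $\delta'\D'-\delta'\lambda(\mu\lambda)^{-1}\mu\D'$ is positive definite on $\ker\mu$ for $y\neq 0$; establishing this positivity (or at least injectivity) on the trace-free subspace is the genuinely technical step, and I expect the energy estimate to come from pairing against $\wh u$ and exploiting that $\delta'\D'$ is of Laplace type. A secondary, milder care point is the behaviour of the solution near $y=0$, which the order-of-vanishing of the data resolves and which upgrades the conclusion from mere $L^2$ membership to the stated decay at infinity.
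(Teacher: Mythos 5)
Your overall strategy --- pass to the Fourier side, solve a fibrewise linear system for $(\wh u,\wh w)$, eliminate $\wh w$ by a Schur complement, and read off smoothness and decay from homogeneity --- is a legitimate line of attack and genuinely different from the paper's. But as written it is a programme, not a proof: each of the three claims on which the argument actually rests is deferred. First, the invertibility of $\mu\lambda$ on $\mathit{S}^{k-1}\times \mathit{S}^{\ell-1}$ is announced as ``my first concrete task'' and never carried out. Second, the injectivity (let alone positivity) of the corrected symbol $\delta'\D'-\delta'\lambda(\mu\lambda)^{-1}\mu\D'$ on $\ker\mu$ for $y\neq 0$ is labelled the genuinely technical step and is only ``expected'' to follow from an energy estimate. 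Third, the compatibility of the side constraint $\mu\wh u=0$ with unique solvability of the coupled system is asserted rather than checked; note that the unconstrained map $(u,w)\mapsto \D'u+\lambda w$ has a nontrivial kernel (for instance $(\lambda w'',-\D'w'')$, since $\D'$ and $\lambda$ commute), so the dimension bookkeeping is not automatic and the two rows of your system need not determine $(\wh u,\wh w)$ uniquely even after imposing $\mu\wh u=0$. Until these points are established, the existence of the decomposition has not been proved. A secondary issue is the behaviour at $y=0$: your multipliers for $\wh u$ reach homogeneity $-2$ in two dimensions, and the first-order vanishing of $\wh{\delta'f}$ at the origin gives local integrability of $\wh u$ only at the borderline; the assertion $u\to 0$ as $|x|\to\infty$ needs an explicit argument, exactly as in Sharafutdinov's treatment of the potential $v$.

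For comparison, the paper avoids the symbol analysis entirely. It sets $g=\sigma(Af)\in C_c^\infty(\mathit{S}^{k+\ell})$, invokes the already-proved solenoidal decomposition $g=g^s+\D v$ for symmetric tensor fields, and transports it back through the purely algebraic identities $A^{-1}=(-1)^{\ell}A$, $f+(-1)^{\ell+1}A\sigma Af=\lambda w$, $A(\D v-\D'v)=\lambda w'$ and $\D'A=A\D'$; the condition $\mu f^s=0$ then follows from the self-adjointness of $A\sigma A$ and the identity $\ker\mu=\mathrm{Im}(A\sigma A)$. A second proof in the paper combines the direct sums $\ker\delta'\oplus\mathrm{Im}\,\D'$ and $\ker\mu\oplus\mathrm{Im}\,\lambda$ and then normalizes $u$ using the commutativity of $\D'$ and $\lambda$. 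If you wish to salvage your route, the most economical fix for your hardest step is precisely this conjugation: the operator whose symbol you must invert is intertwined, via $\sigma A$, with the operator $\delta\D$ from the classical decomposition, whose fibrewise invertibility is known; proving that intertwining amounts to the same algebraic identities the paper quotes from the kernel description of the mixed ray transform.
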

\begin{proof}  Given any tensor field  $ f\in C_c^\infty (\mathit{S}^k \times \mathit{S}^{\ell})$, we can define the tensor field $g= \sigma (Af) \in C_c^{\infty}(S^{k+\ell})$. Since $C_c^{\infty}(S^{k+\ell}) \subset \mathcal{S}(S^{k+\ell}) $, we now apply the solenoidal-potential decomposition on $g$ from \cite[Theorem 2.6.2]{Sharafutdinov_1994}. This implies there are smooth tensor fields $g^s $ and $v$ such that
\begin{align}\label{decomposition}
    g= g^s+d v, \quad \delta g^s=0, \quad g^s, v\rightarrow 0 \, \, \mbox{as} \, \, |x| \rightarrow \infty.
\end{align}
Next we write $f$ as:
\begin{align}
    f = A^{-1} \left( Af - \sigma Af + \sigma Af\right).
\end{align}
Next we substitute the expression of $g = \sigma Af$ from \eqref{decomposition}  into above equation and conclude 
\begin{align}
    f =  A^{-1} \left( Af - \sigma Af + (g^s+\D v)\right).
\end{align}
Furthermore, using the fact that $ A^{-1}= (-1)^{\ell} A$ (see \cite[Equation 4.6]{kernel_MRT}) from above we obtain that 
\begin{align}\label{eq_1}
    f = f + (-1)^{\ell+1} A \sigma Af + (-1)^{\ell} A (g^s+\D v).
\end{align}
By  \cite{kernel_MRT}*{Section $4.1$} we have that
\begin{align*}
 f + (-1)^{\ell+1} A \sigma Af= \lambda w,  \quad A(\D v-\D'v)= \lambda w', \quad\mbox{and} \quad \D'A=A\D'. 
\end{align*}
The combination of this with \eqref{eq_1} implies
\begin{align}
    f = f^s+ \D'V + \lambda W, \quad \mbox{where} \quad f^s= (-1)^{\ell} A g^s, \; V= (-1)^\ell A v,\; W = w + (-1)^{\ell} w'. 
\end{align}
To conclude the argument, it is enough to show that $ f^s$ satisfies $ \delta' f^s=\mu f^s=0$. Using the fact that $ \delta' A = A \delta'$ we have that $ \delta' f^s=0.$ To show that $\mu f^s=0 $ by Lemma \ref{adjoint_of_asigma} we observe that 
\begin{align}
    \mbox{Ker}\mu = (\mbox{Im}\lambda)^{\perp}= \mbox{Ker} (A \sigma A)^{\perp} = \mbox{Im} (A\sigma A)^* = \mbox{Im} A\sigma A.
\end{align}
Similarly, we have that $ \mu V =0$. Moreover, $V(x) \rightarrow 0$ as $|x| \rightarrow \infty $.
This completes the proof.
\smallskip

We now present an alternate proof of Proposition \ref{thm: tensor field decomposition}. Let $\Tilde{C}^\infty (\mathit{S}^k \times \mathit{S}^{\ell - 1}) \subset C^\infty (\mathit{S}^k \times \mathit{S}^{\ell - 1})$ be the space containing all $u$ such that $u \rightarrow 0$ as $|x| \rightarrow \infty$. Also, observe that an analog of \cite[Theorem 2.6.2]{Sharafutdinov_1994} holds for the operators $\D'$ and $\delta'$. That is,
 \begin{equation} \label{eq: direct sum 1}
        C_c^\infty (\mathit{S}^k \times \mathit{S}^{\ell}) = \text{Ker} \delta' \oplus \text{Im} \D'
    \end{equation} 
    for $\D' : \Tilde{C}^\infty (\mathit{S}^k \times \mathit{S}^{\ell - 1}) \rightarrow C^\infty (\mathit{S}^k \times \mathit{S}^{\ell})$. This implies that Im$\D'$ is closed. Since $\lambda$ and $\mu$ are algebraic operators, the decomposition
\begin{equation} \label{eq: direct sum 2}
C_c^\infty (\mathit{S}^k \times \mathit{S}^{\ell}) = \text{Ker} \mu \oplus \text{Im} \lambda
    \end{equation}
    
    holds. This implies Im$\lambda$ is closed which further implies that $\text{Im}\D' + \text{Im}\lambda$ is closed in $C_c^\infty (\mathit{S}^k \times \mathit{S}^{\ell})$. Hence, the following decomposition holds:
\[C_c^\infty (\mathit{S}^k \times \mathit{S}^{\ell}) = \left(\text{Ker} \delta' \cap \text{Ker} \mu\right) \oplus \left(\text{Im} \D' + \text{Im} \lambda\right).\]
    Hence, for any $f \in C_c^\infty (\mathit{S}^k \times \mathit{S}^{\ell}),$ there exists $f^s \in \text{Ker} \delta' \cap \text{Ker} \mu, u' \in \Tilde{C}^\infty (\mathit{S}^k \times \mathit{S}^{\ell - 1})$ and $w' \in C^\infty (\mathit{S}^{k - 1} \times \mathit{S}^{\ell - 1})$ such that 
 $$f = f^s + \D' u' + \lambda w'.$$
    Further, note that the decomposition \eqref{eq: direct sum 2} also holds for the space $\Tilde{C}^\infty (\mathit{S}^k \times \mathit{S}^{\ell - 1})$ and hence we can write
$$u' = u + \lambda w''; \quad \mu u = 0.$$
    Finally, the commutativity of operators $\D'$ and $\lambda$ gives the decomposition
    $$f = f^s + \D' u + \lambda w$$
    with $w = w' + \D' w''$.    
\end{proof}
\begin{lemma}\label{adjoint_of_asigma}
    The operator $ A\sigma A $ is self adjoint $i.e.,$ $(A\sigma A)^* = A\sigma A.$
\end{lemma}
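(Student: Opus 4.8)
The plan is to exploit that both $A$ and $\sigma$ are algebraic (zeroth-order, pointwise) operators, so that the $L^2$-adjoint is computed slot-by-slot from the Euclidean contraction of components, and the functional-analytic part (the integration over $\R^2$) plays no role. Writing the inner product of two tensor fields as $\langle p, q\rangle = \int_{\R^2} p_{i_1\dots i_k j_1\dots j_\ell}\, q_{i_1\dots i_k j_1\dots j_\ell}\, dx$ (Einstein summation with the Euclidean metric), it suffices to identify the pointwise transposes of $A$ and $\sigma$ on the finite–dimensional component space and then use $(A\sigma A)^* = A^* \sigma^* A^*$. All three operators are viewed as acting on $T^{k+\ell}$ with the same inner product, so the composition and its adjoint are unambiguous.

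First I would record that $\sigma$ is self-adjoint. Since $\sigma = \frac{1}{(k+\ell)!}\sum_{\pi \in \Pi_{k+\ell}} \pi$ is the average of the index-permutation operators, each permutation operator satisfies $\pi^* = \pi^{-1}$, and $\Pi_{k+\ell}$ is closed under inversion, the sum is unchanged under $*$; hence $\sigma^* = \sigma$ (indeed $\sigma$ is the orthogonal projection of $T^{k+\ell}$ onto $S^{k+\ell}$).

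Next comes the key step: establishing $A^* = (-1)^\ell A$. I would compute directly from the definition \eqref{def_of_A}. For tensors $f,h$,
\[
\langle Af, h\rangle = \sum_{i,j} (-1)^{\ell - N(j_1,\dots,j_\ell)}\, f_{i_1\dots i_k\, \delta(j_1,\dots,j_\ell)}\, h_{i_1\dots i_k j_1\dots j_\ell}.
\]
Relabeling by the involution $j \mapsto \delta(j)$ (a bijection of index tuples with $\delta^2 = \mathrm{id}$) and using $N(\delta(j)) = \ell - N(j)$, the exponent $\ell - N(j)$ becomes $N(j')$ in the new variable $j' = \delta(j)$, which yields $(A^* h)_{i j'} = (-1)^{N(j')} h_{i\,\delta(j')} = (-1)^\ell (Ah)_{i j'}$. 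Equivalently, one may note that $A = \mathrm{Id}^{\otimes k} \otimes B^{\otimes \ell}$, where $B = \left(\begin{smallmatrix} 0 & 1 \\ -1 & 0 \end{smallmatrix}\right)$ acts on each of the last $\ell$ copies of $\R^2$; since $B^{\top} = -B$, one gets $A^* = (-1)^\ell A$ at once. This is consistent with the relation $A^{-1} = (-1)^\ell A$ quoted from \cite{kernel_MRT}, as $A$ is in fact orthogonal.

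Finally, combining the two facts,
\[
(A\sigma A)^* = A^* \sigma^* A^* = \big((-1)^\ell A\big)\,\sigma\,\big((-1)^\ell A\big) = (-1)^{2\ell}\, A\sigma A = A\sigma A,
\]
which is the claim. The only genuinely delicate point is the sign bookkeeping in $A^* = (-1)^\ell A$: one must check that the per-slot sign $(-1)^{\ell - N}$ is exactly compensated, up to the global factor $(-1)^\ell$, after relabeling by the involution $\delta$. Everything else is formal.
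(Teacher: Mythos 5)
Your proof is correct, but it takes a more structural route than the paper's. The paper proves the lemma by brute force: it expands the contraction $\left<A\sigma Af,h\right>$ over components, groups terms by the number of indices equal to $1$, invokes the component formula \eqref{eq: g and f relation with indices} for $\sigma Af$, and arrives at the manifestly symmetric expression $(-1)^{\ell}\sum_{p}\binom{k+\ell}{p}(\sigma Af)_{1\dots1 2\dots2}\,(\sigma Ah)_{1\dots1 2\dots2}$. You instead factor the adjoint as $(A\sigma A)^{*}=A^{*}\sigma^{*}A^{*}$ and isolate the two key facts: $\sigma^{*}=\sigma$ (an average of orthogonal permutation operators, hence the orthogonal projection onto $S^{k+\ell}$) and $A^{*}=(-1)^{\ell}A$, verified either by the relabeling $j\mapsto\delta(j)$ with $N(\delta(j))=\ell-N(j)$ in \eqref{def_of_A} or via the identification $A=\mathrm{Id}^{\otimes k}\otimes B^{\otimes\ell}$ with $B$ antisymmetric; both computations check out, and the sign bookkeeping is right. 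The two arguments are mathematically the same identity — the paper's final formula is exactly $\left<A\sigma Af,h\right>=\left<\sigma Af,A^{*}h\right>=(-1)^{\ell}\left<\sigma Af,\sigma Ah\right>$ written out in components — but your version makes the mechanism visible and, as a bonus, records that $A^{*}=A^{-1}$, i.e.\ $A$ is orthogonal. That by-product is genuinely useful: the chain $\mathrm{Ker}\,\mu=(\mathrm{Im}\,\lambda)^{\perp}=\mathrm{Im}\,(A\sigma A)^{*}=\mathrm{Im}\,A\sigma A$ in the proof of Proposition \ref{thm: tensor field decomposition} is cleaner once $A^{*}=(-1)^{\ell}A$ is on record. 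The only point to keep explicit is that $A\sigma A$ preserves the subspace $S^{k}\times S^{\ell}\subset T^{k+\ell}$ and that the inner product used on $S^{k}\times S^{\ell}$ is the restriction of the full contraction on $T^{k+\ell}$, so self-adjointness on the larger space does restrict to the subspace; you implicitly use this when you compute all adjoints on $T^{k+\ell}$.
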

\begin{proof}
    For $f, g \in C_c^\infty(\mathit{S}^k \times \mathit{S}^\ell)$, we want to show $\left<A \sigma A f, h\right> = \left<h, A \sigma A g\right>$. 
\begin{align*}
    \left<A \sigma A f, h\right> &= (A \sigma A f)_{i_1 \dots i_k j_1 \dots j_\ell} (h)_{i_1 \dots i_k j_1 \dots j_\ell}\\
    &= \sum_{r = 0}^k \sum_{s = 0}^\ell \binom{k}{r} \binom{\ell}{s} (A \sigma A f)_{\underbrace{1 \dots 1}_{r} \underbrace{2 \dots 2}_{k - r} \underbrace{1 \dots 1}_{\ell - s} \underbrace{2 \dots 2}_{s}} \hspace{1mm} (h)_{\underbrace{1 \dots 1}_{r} \underbrace{2 \dots 2}_{k - r} \underbrace{1 \dots 1}_{\ell - s} \underbrace{2 \dots 2}_{s}}\\
    &= \sum_{r = 0}^k \sum_{s = 0}^\ell \binom{k}{r} \binom{\ell}{s} {(-1)}^s (\sigma A f)_{\underbrace{1 \dots 1}_{r} \underbrace{2 \dots 2}_{k - r} \underbrace{1 \dots 1}_{s} \underbrace{2 \dots 2}_{\ell - s}} \hspace{1mm} (h)_{\underbrace{1 \dots 1}_{r} \underbrace{2 \dots 2}_{k - r} \underbrace{1 \dots 1}_{\ell - s} \underbrace{2 \dots 2}_{s}}\\
    &= \sum_{p = 0}^{k + \ell} (\sigma A f)_{\underbrace{1 \dots 1}_{p} \underbrace{2 \dots 2}_{k + \ell - p}} \left\{\sum_{\substack{r + s = p \\ 0 \leq r \leq k \\ 0 \leq s \leq \ell}} \binom{k}{r} \binom{\ell}{s} {(- 1)}^{s} (h)_{\underbrace{1 \dots 1}_{r} \underbrace{2 \dots 2}_{k - r} \underbrace{1 \dots 1}_{\ell - s} \underbrace{2 \dots 2}_{s}}\right\}\\
    &= \sum_{p = 0}^{k + \ell} \binom{k + \ell}{p} {(-1)}^\ell (\sigma A f)_{\underbrace{1 \dots 1}_{p} \underbrace{2 \dots 2}_{k + \ell - p}} \hspace{1mm} (\sigma A h)_{\underbrace{1 \dots 1}_{p} \underbrace{2 \dots 2}_{k + \ell - p}}. 
\end{align*}
The last equality follows from the relation \eqref{eq: g and f relation with indices}. A similar calculation shows that
\begin{equation*}
    \left<h, A \sigma A g\right> = \sum_{p = 0}^{k + \ell} \binom{k + \ell}{p} {(-1)}^\ell (\sigma A f)_{\underbrace{1 \dots 1}_{p} \underbrace{2 \dots 2}_{k + \ell - p}} \hspace{1mm} (\sigma A h)_{\underbrace{1 \dots 1}_{p} \underbrace{2 \dots 2}_{k + \ell - p}}.
\end{equation*}
Hence, $A \sigma A$ is a self-adjoint operator. This completes the proof.
\end{proof}
\begin{remark}
For some $u \in C^\infty (\mathit{S}^{k-1} \times \mathit{S}^{\ell})$ and $w \in C^\infty (\mathit{S}^{k - 1} \times \mathit{S}^{\ell - 1})$, $\textit{N}_{k,\ell} (\D^\prime u) = 0$ and $\textit{N}_{k,\ell} (\lambda w) = 0$. Hence, we have $\textit{N}_{k,\ell} f = \textit{N}_{k,\ell} (f^s)$. 
\end{remark}

The proof of Theorem \ref{th:inversion_normal_ope} relies on the following lemma, which states (with some assumptions on $f^s$) that the Fourier transform of $f^s$ is completely determined explicitly if $\widehat{f^s}_{1\dots 1 1\dots 1}$ is known.

\begin{lemma} \label{lem: relation between components of f}
Let $h$ be a $(k+\ell)$-tensor satisfying $\mu h = 0$ and $y_m h_{m m_2 \dots m_k n_1 \dots n_\ell} = 0$ then $h$ is completely determined in terms of $h_{\underbrace{1 \dots 1}_k \underbrace{1 \dots 1}_\ell}$ as follows:
\[h_{m_1 \dots m_k n_1 \dots n_\ell} (y) = {(-1)}^{k - N(m_1,\dots,m_k)} \left(\frac{y_1}{y_2}\right)^{\left(k - \ell) - (N(m_1,\dots,m_k) - N(n_1,\dots,n_\ell)\right) } h_{\underbrace{1 \dots 1}_k \underbrace{1 \dots 1}_\ell} (y).\]
\end{lemma}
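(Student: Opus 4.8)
The plan is to use the two hypotheses one at a time: the relation $y_m h_{m m_2 \dots m_k n_1 \dots n_\ell} = 0$ will normalize the first $k$ indices, and the trace condition $\mu h = 0$ (fed back into the first relation) will normalize the last $\ell$ indices. Throughout I treat $y$ as a fixed point with $y_1, y_2 \neq 0$, so that the divisions below are legitimate; the stated identity is an identity of this kind on the complement of the coordinate axes, which is all that the inversion formula requires.

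First I would handle the first block. Since $h$ is symmetric in its first $k$ slots, the component $h_{m_1 \dots m_k n_1 \dots n_\ell}$ depends on $(m_1, \dots, m_k)$ only through the number $N(m_1, \dots, m_k)$ of ones. Freezing all but one first-block slot and reading the hypothesis as $y_1 h_{1 \dots} + y_2 h_{2 \dots} = 0$ shows that turning a single $2$ into a $1$ in the first block multiplies the component by $-y_1/y_2$. Iterating over the $k - N(m_1,\dots,m_k)$ twos gives
\[h_{m_1 \dots m_k n_1 \dots n_\ell} = \left(-\frac{y_1}{y_2}\right)^{k - N(m_1,\dots,m_k)} h_{\underbrace{1 \dots 1}_{k}\, n_1 \dots n_\ell}.\]

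Next I would normalize the last block using $\mu h = 0$, which in two dimensions reads $h_{\dots 1 \dots 1} + h_{\dots 2 \dots 2} = 0$, where the two displayed entries are the last index of the first block and the last index of the last block. Freezing the first $k-1$ first-block indices to $1$ and choosing the remaining last-block pattern to carry $b$ ones, the trace relation couples the fully normalized component with $b+1$ ones in the last block to a component carrying one $2$ in the first block and $b$ ones in the last block. The latter is not yet normalized, so I would reuse the previous step (a single application, contributing the factor $-y_1/y_2$) to convert its lone first-block $2$ back into a $1$. What remains is a self-contained recursion in the number of last-block ones, whose iteration from $N(n_1,\dots,n_\ell)$ up to $\ell$ yields
\[h_{\underbrace{1 \dots 1}_{k}\, n_1 \dots n_\ell} = \left(\frac{y_2}{y_1}\right)^{\ell - N(n_1,\dots,n_\ell)} h_{\underbrace{1 \dots 1}_{k} \underbrace{1 \dots 1}_{\ell}}.\]
Substituting this into the first display and collecting the powers of $y_1$ and $y_2$ produces the exponent $(k-\ell) - (N(m_1,\dots,m_k) - N(n_1,\dots,n_\ell))$ on $y_1/y_2$ together with the sign $(-1)^{k - N(m_1,\dots,m_k)}$, which is exactly the asserted formula.

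The main obstacle is the second step. The condition $\mu h = 0$ does not directly relate two last-block configurations — it ties a last-block index to a first-block index — so the recursion in the last block is not immediate. The key observation is that each use of the trace relation produces a spurious $2$ in the first block, and one must invoke the first-block normalization once more to remove it; only after this coupling does a clean recursion in the number of last-block ones emerge, with the correct factor $y_2/y_1$. Everything else is bookkeeping with the symmetrization and the counting function $N$.
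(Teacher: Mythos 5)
Your proof is correct and is essentially the same argument as the paper's: both use the two hypotheses as index-substitution rules (each first-block $2\to1$ costs $-y_1/y_2$ via $y_mh_{mm_2\dots m_kn_1\dots n_\ell}=0$, and the trace condition $\mu h=0$ trades a $(2,2)$ cross-block pair for a $(1,1)$ pair with a sign) and track the resulting signs and powers of $y_1/y_2$. The only difference is organizational: the paper first cancels $\min\{M_2,N_2\}$ cross-block pairs and then splits into two cases, whereas your ordering (normalize the first block entirely, then run the combined trace-plus-orthogonality recursion on the second block) reaches the same formula without the case distinction.
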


\begin{proof}
To simplify the calculation and to write things in compact form, we first express $h$ using the notations $M_1, M_2, N_1$, and $N_2$ defined earlier (in Lemma \ref{lem;normal_operator}). 
$$h_{m_1 \dots m_k n_1 \dots n_\ell} (y) = h_{\underbrace{1 \dots 1}_{M_1} \underbrace{2 \dots 2}_{M_2} \underbrace{1 \dots 1}_{N_1} \underbrace{2 \dots 2}_{N_2}} (y).$$
The hypothesis $\mu h = 0$ gives 
\begin{equation} \label{lemma1_0}
h_{2 m_2 \dots m_k 2 n_2 \dots n_\ell} (y) = - \hspace{1mm} h_{1 m_2 \dots m_k 1 n_2 \dots n_\ell} (y).
\end{equation}
Let $M = \min \{M_2, N_2\}$ then by applying this identity $M$ times, we get
\begin{equation} \label{lemma1_1}
h_{m_1 \dots m_k n_1 \dots n_\ell} (y) = {(-1)}^M h_{\underbrace{1 \dots 1}_{M_1 + M} \underbrace{2 \dots 2}_{M_2 - M} \underbrace{1 \dots 1}_{N_1 + M} \underbrace{2 \dots 2}_{N_2 - M}} (y).
\end{equation}
Depending on whether $M  = M_2 \mbox{ or } N_2$, we can further simplify this expression in the following way.\smallskip

 \textbf{Case 1:} $M = N_2$.  Inserting $M = N_2$ in equation \eqref{lemma1_1}, we deduce
\begin{equation}
h_{m_1 \dots m_k n_1 \dots n_\ell} (y) = {(-1)}^{N_2} h_{\underbrace{1 \dots 1}_{M_1 + N_2} \underbrace{2 \dots 2}_{M_2 - N_2} \underbrace{1 \dots 1}_{\ell}}(y).
\end{equation}
Also, from the other hypotheses of the lemma, we have $y_m h_{m m_2 \dots m_k n_1 \dots n_\ell} = 0$, that is
\begin{equation*}
h_{2 m_2 \dots m_k n_1 \dots n_\ell} (y) = - \hspace{1mm} \frac{y_1}{y_2} \hspace{1mm} h_{1 m_2 \dots m_k n_1 \dots n_\ell} (y).
\end{equation*}
This finally gives the following equation:
\begin{align*}
h_{m_1 \dots m_k n_1 \dots n_\ell} (y) &= {(-1)}^{N_2} \left\{- \hspace{1mm} \frac{y_1}{y_2}\right\}^{M_2 - N_2} h_{\underbrace{1 \dots 1}_k \underbrace{1 \dots 1}_\ell}(y)\\
&= {(-1)}^{M_2} \left(\frac{y_1}{y_2}\right)^{M_2 - N_2} h_{\underbrace{1 \dots 1}_k \underbrace{1 \dots 1}_\ell}(y).
\end{align*}
\smallskip

\textbf{Case 2:} $M= M_2$.

 In this case, equation \eqref{lemma1_1} becomes
\begin{equation} \label{lemma1_2}
h_{m_1 \dots m_k n_1 \dots n_\ell} (y) = {(-1)}^{M_2} h_{\underbrace{1 \dots 1}_{k} \underbrace{1 \dots 1}_{N_1 + M_2} \underbrace{2 \dots 2}_{N_2 - M_2}}(y).
\end{equation}
Again, using $y_m h_{m m_2 \dots m_k n_1 \dots n_\ell} = 0$, we have 
\begin{equation*}
h_{1 m_2 \dots m_k n_1 \dots n_\ell} (y) = - \hspace{1mm} \frac{y_2}{y_1} \hspace{1mm} h_{2 m_2 \dots m_k n_1 \dots n_\ell} (y).
\end{equation*}
This converts equation \eqref{lemma1_2} to the following equation:
\begin{equation}
h_{m_1 \dots m_k n_1 \dots n_\ell} (y) = {(-1)}^{M_2} \left\{- \hspace{1mm} \frac{y_2}{y_1}\right\}^{N_2 - M_2} h_{\underbrace{1 \dots 1}_{k - \left(N_2 - M_2\right)} \underbrace{2 \dots 2}_{N_2 - M_2} \underbrace{1 \dots 1}_{N_1 + M_2} \underbrace{2 \dots 2}_{N_2 - M_2}}.
\end{equation}
Finally, using \eqref{lemma1_0}, we conclude 
\begin{align*}
h_{m_1 \dots m_k n_1 \dots n_\ell} (y) &= {(-1)}^{M_2} \left\{- \hspace{1mm} \frac{y_2}{y_1}\right\}^{N_2 - M_2} {(-1)}^{N_2 - M_2} h_{\underbrace{1 \dots 1}_k \underbrace{1 \dots 1}_\ell}(y)\\
&= {(-1)}^{M_2} {\left(\frac{y_1}{y_2}\right)}^{M_2 - N_2}h_{\underbrace{1 \dots 1}_k \underbrace{1 \dots 1}_\ell}(y)\\
&= {(-1)}^{k - N(m_1,\dots,m_k)} \left(\frac{y_1}{y_2}\right)^{\left(k - \ell) - (N(m_1,\dots,m_k) - N(n_1,\dots,n_\ell)\right) } h_{\underbrace{1 \dots 1}_k \underbrace{1 \dots 1}_\ell} (y),
\end{align*}
where in the last equality, we use definitions of $M_2$ and $N_2$. This completes the proof of the lemma. 
\end{proof}
\noindent In light of this lemma, we see that to prove Theorem \ref{th:inversion_normal_ope}, it is sufficient to express $\widehat{f^s}_{\underbrace{1 \dots 1}_k \underbrace{1 \dots 1}_\ell}$ in terms of the Fourier transform of the normal operator of MiRT. Now we are ready to present the proof of the Theorem \ref{th:inversion_normal_ope}. 
\begin{proof}[Proof of Theorem \ref{th:inversion_normal_ope}]

The aim here is to recover $f^s$ componentwise from the knowledge of the normal operator $\textit{N}_{k,\ell} f$ of the MiRT. Let us recall the following identity, proved in Lemma \ref{lem;normal_operator}: 
$$({\textit{N}_{k,\ell} f})_{i_1 \dots i_k j_1 \dots j_\ell} (x) = 2 f_{m_1 \dots m_k n_1 \dots n_\ell} (x)  *  {(- 1)}^{(J_1 + N_1)} \quad \frac{x_1^{(I_1 + J_2 + M_1 + N_2)} x_2^{(I_2 + J_1 + M_2 + N_1)}}{|x|^{2k + 2\ell+ 1}},$$
where $I_i, J_i, M_i$, and $N_i$ are functions of corresponding indices $(i_1, \dots, i_k), (j_1, \dots, j_\ell), (m_1, \dots, m_k)$, and $(n_1, \dots, n_\ell)$ respectively (defined earlier as in Lemma \ref{lem;normal_operator}).  
The idea here is to take the Fourier transform of this equation and simplify the right-hand side of the obtained equation by using the properties of the Fourier transform and convolution. This will allow us to write the components of the Fourier transform of $f^s$ in terms of the Fourier transform of the normal operator $\textit{N}_{k,\ell} f$. Finally, $f^s$ is obtained componentwise using Fourier inversion.

As mentioned above, taking the Fourier transform to equation \eqref{Normal operator MRT} and using the relation $\widehat{u \ast v} = (2 \pi) \widehat{u} \widehat{v}$ and $ \textit{N}_{k,\ell} f = \textit{N}_{k,\ell} f^s $, we obtain
\begin{align*}
    \F\left[({\textit{N}_{k,\ell} f})_{i_1 \dots i_k j_1 \dots j_\ell}\right] &= \F \left[({\textit{N}_{k,\ell} f^s})_{i_1 \dots i_k j_1 \dots j_\ell}\right]\\ 
    &= {(- 1)}^{(J_1 + N_1)} \left(4 \pi\right) \widehat{f^s}_{m_1 \dots m_k n_1 \dots n_\ell} \F \left[\frac{x_1^{(I_1 + J_2 + M_1 + N_2)} x_2^{(I_2 + J_1 + M_2 + N_1)}}{|x|^{2k + 2\ell + 1}}\right].
\end{align*}
The right-hand side of the above equation is the product of a smooth function and a tempered distribution. Further calculating the second factor on the right-hand side in the above equation using the properties of the Fourier transform, we get
\begin{equation}
    \begin{aligned} \label{fourier transform of normal operator}
    \F \left[({\textit{N}_{k,\ell} f})_{i_1 \dots i_k j_1 \dots j_\ell}\right] (y) &= \frac{{(-1)}^{k + \ell + J_1 + N_1} \pi}{2^{2{(k + \ell - 1)}}} \frac{\Gamma\left(- (k + \ell) - \frac{1}{2}\right)}{\Gamma \left(k + \ell + \frac{1}{2}\right)} \\ 
    &\qquad\times \widehat{f^s}_{m_1 \dots m_k n_1 \dots n_\ell} (y) 
    \,\,\partial_{\underbrace{1 \dots 1}_{A \text{ times}} \underbrace{2 \dots 2}_{B \text{ times}}} |y|^{2{(k + \ell)} - 1},
\end{aligned}
\end{equation}
where $A = I_1 + J_2 + M_1 + N_2,$ \hspace{1mm}$ B = I_2 + J_1 + M_2 + N_1$ and $\displaystyle \partial_{i_1 \dots i_m} = \frac{\partial^{m}}{\partial y_{i_1} \dots \partial y_{i_m}}$. 

To proceed further, we need to simplify the last term $ \displaystyle \partial_{\underbrace{1 \dots 1}_{A \text{ times}} \underbrace{2 \dots 2}_{B \text{ times}}} |y|^{2{(k + \ell)} - 1}$ appearing in the above equation. 
For this purpose, let us define $e_i = y_i / |y| \in C^\infty (\FR^2 \symbol{92} \left\{0\right\})$ and a tensor field $\varepsilon_{ij}(y) = \delta_{ij} - e_i(y)e_j(y)$ as in \cite{Sharafutdinov_1994} then we have the following identity  \cite[Lemma 2.11.1]{Sharafutdinov_1994}:

\begin{equation*}
    \partial_{i_1 \dots i_{2m}} |y|^{2m - 1} = {((2m - 1)!!)}^2 |y|^{-1} \sigma (\varepsilon_{i_1i_2} \dots \varepsilon_{i_{2m - 1}i_{2m}}),
\end{equation*}
where $\displaystyle k !! = k (k - 2) (k - 4) \dots$ and $(- 1) !! = 1$. In our setup, the above equality takes the following form:
\begin{equation} \label{partial derivative of |y|}
\begin{aligned}
 \partial_{\underbrace{1 \dots 1}_{A \text{ times}} \underbrace{2 \dots 2}_{B \text{ times}}} |y|^{2{(k + \ell)} - 1}&= {(- 1)}^{(I_1 + J_2 + M_1 + N_2)} {((2(k + \ell) - 1)!!)}^2\\& \qquad\frac{y_1^{(I_2 + J_1 + M_2 + N_1)}y_2^{(I_1 + J_2 + M_1 + N_2)}}{|y|^{2 (k + \ell) + 1}}.
\end{aligned}
\end{equation}
Inserting this expression in equation \eqref{fourier transform of normal operator}, we conclude
\begin{align*} 
\F\left[({\textit{N}_{k,\ell} f})_{i_1 \dots i_k j_1 \dots j_\ell}\right] (y) &= b (k, \ell) {(- 1)}^{(I_1 + M_1)} \widehat{f^s}_{m_1 \dots m_k n_1 \dots n_\ell} (y) \frac{y_1^{(I_2 + J_1 + M_2 + N_1)}y_2^{(I_1 + J_2 + M_1 + N_2)}}{|y|^{2 (k + \ell) + 1}}\\
&= b (k, \ell) {(- 1)}^{I_1} \frac{y_1^{I_2 + J_1}y_2^{I_1 + J_2}}{|y|^{2 (k + \ell) + 1}} \left\{{(- 1)}^{M_1} y_1^{M_2 + N_1} y_2^{M_1 + N_2} \widehat{f^s}_{m_1 \dots m_k n_1 \dots n_\ell} (y)\right\},
\end{align*}
where the constant $b_{k,\ell}$ is given by
\begin{align*}
    b (k, \ell) = \frac{{(- 1)}^{k + \ell} {((2(k + \ell) - 1)!!)}^2}{2^{2{(k + \ell - 1)}}} \frac{\pi \Gamma\left(- (k + \ell) - \frac{1}{2}\right)}{\Gamma \left(k + \ell + \frac{1}{2}\right)}.
\end{align*}

Note that, there is a summation over indices $m_1, \dots, m_k$ and $n_, \dots, n_\ell$ appearing in $M_1$, $M_2$ and $N_1, N_2$ on the right-hand side of the above equation. From Lemma \ref{lem: relation between components of f}, to recover $f^s$ it is sufficient to know $f^s_{\underbrace{1 \dots 1}_k \underbrace{1 \dots 1}_\ell}$. With this motivation, we take $i_1 = \dots = i_k = 1 = j_1 = \dots = j_\ell$ in the above relation to get

\begin{align*}
\F\left[({\textit{N}_{k,\ell} f})_{\underbrace{1 \dots 1}_{k} \underbrace{1 \dots 1}_{\ell}}\right] (y) &= b (k, \ell) \frac{ (- 1)^k \,y_1^\ell y_2^k}{|y|^{2 (k + \ell) + 1}} \left\{{(- 1)}^{M_1} y_1^{M_2 + N_1} y_2^{M_1 + N_2} \widehat{f^s}_{m_1 \dots m_k n_1 \dots n_\ell} (y)\right\}\\
&= \frac{ b (k, \ell) }{|y|^{2 (k + \ell) + 1}} \left\{ 
y_1^{l + 2M_2 + N_1 - N_2} y_2^{k + 2N_2 + M_1 - M_2} \widehat{f^s}_{1 \dots 1 1 \dots 1} (y)\right\}  \text{ (Lemma \ref{lem: relation between components of f})}\\
&= \frac{ b (k, \ell) }{|y|^{2 (k + \ell) + 1}} \widehat{f^s}_{1 \dots 1 1 \dots 1} (y) \left\{\sum_{r = 0}^{k + \ell} \binom{k + \ell}{r} {({y_1}^2)}^{k + \ell - r} {({y_2}^2)}^{r} \right\}\\
&= \frac{ b (k, \ell) }{|y|^{2 (k + \ell) + 1}} \widehat{f^s}_{1 \dots 1 1 \dots 1} (y) {|y|}^{2(k + \ell)}\\
&= \frac{ b (k, \ell) }{|y|} \widehat{f^s}_{1 \dots 1 1 \dots 1} (y).
\end{align*}

Hence we have
\begin{equation} \label{Fourier Transform of first component}
    \widehat{f^s}_{1 \dots 1 1 \dots 1} (y) = \frac{|y|}{b (k, \ell)} \F \left[({\textit{N}_{k,\ell} f})_{\underbrace{1 \dots 1}_{k} \underbrace{1 \dots 1}_{\ell}}\right] (y). 
\end{equation}
It is known that $|y| \F (\cdot) = \F {(- \Delta)}^{1/2}(\cdot)$, where ${(- \Delta)}^{1/2} u = - \frac{1}{2 \pi} \hspace{1mm} u \ast {|x|}^{- 3}$. This implies
\begin{equation*}
    {f^s}_{1 \dots 1 1 \dots 1} = \frac{1}{b (k, \ell)} {(- \Delta)}^{1/2} ({\textit{N}_{k,\ell} f})_{1 \dots 1 1 \dots 1}.
\end{equation*}
Next combining Lemma \ref{lem: relation between components of f}, and equation \eqref{Fourier Transform of first component}, we derive
\begin{equation*}
    \widehat{f^s}_{m_1 \dots m_k n_1 \dots n_\ell} (y) = {(- 1)}^{M_2} {\left(\frac{y_1}{y_2}\right)}^{M_2 - N_2} \frac{|y|}{b (k, \ell)} \F\left[({\textit{N}_{k,\ell} f})_{1 \dots 1 1 \dots 1}\right] (y),
\end{equation*}
which is the required relation.  Further simplifying the above formula, we obtain 
\begin{align*}
    y^{2N_2}_2  \widehat{f^s}_{m_1 \dots m_k n_1 \dots n_\ell} (y) = {(- 1)}^{M_2}  y_1^{M_2}\, y_2^{N_2}  \frac{|y|}{b (k, \ell)} \F \left[({\textit{N}_{k,\ell} f})_{1 \dots 1 1 \dots 1}\right] (y).
\end{align*}
This implies 
\begin{align*}
   \F( (\Delta')^{N_2} f^s)_{m_1 \dots m_k n_1 \dots n_\ell}= (-1)^{M_2+N_2}\, \F\left( (-\Delta)^{1/2} P(D) \,  (N_{k,\ell}f)_{1\dots 11\dots 1}\right),
\end{align*}
where $ \Delta'= \partial^2_{x_2}$ is the Laplace operator in $x_2$ variable and $P(D) = D^{M_2}_{x_1} \, D^{N_2}_{x_2}$, where $ D= \frac{1}{\I }\partial$, is a constant coefficient differential operator whose symbol is $y^{M_2}_1\, y^{N_2}_2$. Therefore we have 
\begin{align}
    f^s_{m_1 \dots m_k n_1 \dots n_\ell} = (-1)^{M_2}\, (-\Delta)^{1/2}(-\Delta')^{-N_2} P(D) \,  (N_{k,\ell}f)_{1\dots 11\dots 1}.
\end{align}
This completes the proof.
\end{proof}

\subsection{Unique continuation property} \label{sec: UCP}
This section is devoted to proving a UCP result for MiRT. The idea here is to relate MiRT data to LRT data (for appropriately modified tensor field) and then use the known UCP results for LRT.

\begin{proof}[Proof of Theorem \ref{th:ucp_MiRT}]
We prove this theorem for $f \in C_c^\infty(\mathit{S}^k \times \mathit{S}^\ell)$. Let $I_mg$ be the longitudinal ray transform of $g$ as defined in Definition \eqref{def_of_Im}. Then, it is known that the MiRT of $f$ is related to the LRT of $g = \sigma A f$ in the following way (see \cite[Equation 20]{kernel_MRT} for detailed discussion):
\begin{align} \label{relation f and g}
\mathit{L}_{k, \ell}f = I_{k + \ell}g \implies \mathit{L}_{k, \ell}f = 0 \iff I_{k + \ell}g = 0.
\end{align}

This identity, together with the facts $I_m g = 0$ if and only if $g  = \D v$ (see \cite[Theorem 2.2.1]{Sharafutdinov_1994}) and $\mathit{L}_{k, \ell}f = 0$ if and only if  $f = \D^\prime u + \lambda w$ (see  \cite[Theorem 1]{kernel_MRT}) gives 
\begin{align}\label{kernel of MRT for f wrt g}
    f = \D^\prime u + \lambda w \iff \mathit{L}_{k, \ell}f = 0 \iff I_{k + \ell}g = 0 \iff g = \D v
\end{align} 
for some $v \in C^\infty (\mathit{S}^{k + \ell - 1})$, $u \in C^\infty (\mathit{S}^{k-1} \times \mathit{S}^{\ell}), w \in C_c^\infty (\mathit{S}^{k - 1} \times \mathit{S}^{\ell - 1})$ and $g = \sigma A f $ as defined previously. 
 Thus, we have $  f = \D^\prime u + \lambda w \iff g = \D v.$
 
To complete the proof of the theorem, it is sufficient to show that $g = \D v$ for some $v \in C^\infty (\mathit{S}^{k + \ell - 1})$ and $g = \sigma \textit{A}f$. Note $g|_U = 0$ because $f|_U = \lambda w$ and $I_{k + \ell}g(x, \xi) = 0$ for all $(x,\xi) \in U\times \Sb^{n-1}$ (from equation \eqref{relation f and g}). Thus, $g$ satisfies the assumptions of the \cite{UCP_2022}*{Theorem 2.2}, and hence, by applying this theorem, we get $g = \D v$ for some $v$ as required. This completes the proof of the Theorem \ref{th:ucp_MiRT}.
\end{proof}
We now proceed proving UCP using the normal operator of MiRT. To this end, we next  establish a relation  between the normal operator of $L_{k,\ell} f$, and that of $I_{k+\ell} g$, where $g =\sigma Af$.
\begin{lemma} \label{lem: relationship between normal operators}
    Let $f \in C_c^\infty(\mathit{S}^k \times \mathit{S}^\ell)$ and $g = \sigma A f$. Then the normal operators $\textit{N} g$ and $\textit{N}_{k, l} f$ are related by the following relation:
    \begin{equation} \label{eq: relation between normal operators}
        (\textit{N} g)_{i_1 \dots i_k j_1 \dots j_\ell} (x) = {(- 1)}^{J_2} (\textit{N}_{k, \ell} f)_{i_1 \dots i_k \delta(j_1, \dots, j_\ell)} (x).
    \end{equation}
\end{lemma}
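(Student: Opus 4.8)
The plan is to exploit the pointwise identity $L_{k,\ell}f = I_{k+\ell}g$ on $T\Sb^1$ recorded in \eqref{relation f and g}, and to observe that both normal operators are back-projections of one and the same function, differing only in their angular weights. Set $h := L_{k,\ell}f = I_{k+\ell}g \in C^\infty(T\Sb^1)$. The formula for the $L^2$-adjoint derived in the proof of Lemma \ref{lem;normal_operator} gives
\begin{equation*}
(N_{k,\ell}f)_{i_1 \dots i_k j_1 \dots j_\ell}(x) = \int_{\Sb^1} \xi^{i_1}\cdots\xi^{i_k}\,\eta^{j_1}\cdots\eta^{j_\ell}\, h(x - \langle x,\xi\rangle\xi, \xi)\, dS_\xi,
\end{equation*}
while the entirely analogous computation for the longitudinal ray transform (whose adjoint attaches the weight $\xi^{i_1}\cdots\xi^{i_{k+\ell}}$, automatically symmetric) yields
\begin{equation*}
(Ng)_{i_1 \dots i_{k+\ell}}(x) = \int_{\Sb^1} \xi^{i_1}\cdots\xi^{i_{k+\ell}}\, h(x - \langle x,\xi\rangle\xi, \xi)\, dS_\xi.
\end{equation*}
Crucially, since I leave $h$ abstract rather than substituting its definition, no relative factor of $2$ intervenes, and the problem reduces to a purely algebraic comparison of the two weights.

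The elementary input is that, because $\eta = \xi^\perp = (-\xi_2, \xi_1)$, one has $\eta^1 = -\xi^2 = -\xi^{\delta(1)}$ and $\eta^2 = \xi^1 = \xi^{\delta(2)}$, where $\delta$ is the index-flip appearing in \eqref{def_of_A}. Thus each single factor satisfies $\eta^{j} = (-1)^{[j=1]}\xi^{\delta(j)}$, and multiplying over $j_1, \dots, j_\ell$ gives
\begin{equation*}
\eta^{j_1}\cdots\eta^{j_\ell} = (-1)^{J_1}\,\xi^{\delta(j_1)}\cdots\xi^{\delta(j_\ell)},
\end{equation*}
with $J_1 = N(j_1, \dots, j_\ell)$ as in \eqref{def_M_2_N_2}. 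Inserting this into the back-projection formula for $N_{k,\ell}f$ and recognizing the resulting integral as the component of $Ng$ whose last $\ell$ indices have been flipped produces
\begin{equation*}
(N_{k,\ell}f)_{i_1 \dots i_k j_1 \dots j_\ell}(x) = (-1)^{J_1}\,(Ng)_{i_1 \dots i_k \delta(j_1, \dots, j_\ell)}(x).
\end{equation*}

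To put this in the form \eqref{eq: relation between normal operators}, I would apply the involution $\delta$ to the last $\ell$ indices: replacing $(j_1, \dots, j_\ell)$ by $\delta(j_1, \dots, j_\ell)$ turns the count of $1$'s into the count of $2$'s, so the exponent $J_1$ becomes $J_2$, and since $\delta\circ\delta = \mathrm{id}$, solving for $(Ng)_{i_1 \dots i_k j_1 \dots j_\ell}$ gives exactly the claimed identity (using $(-1)^{J_2} = (-1)^{-J_2}$). The main point requiring care is the sign bookkeeping through the involution $\delta$ and the correct passage from $J_1$ to $J_2$; the analytic content, namely that $N_{k,\ell}f$ and $Ng$ are back-projections of the common datum $h$, is immediate from \eqref{relation f and g}, so I do not expect any substantial obstacle beyond this combinatorial bookkeeping.
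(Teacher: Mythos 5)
Your proof is correct, and it takes a genuinely different route from the paper's. The paper never invokes the identity $L_{k,\ell}f = I_{k+\ell}g$ in this lemma; instead it works entirely at the level of convolution kernels: it expands the components $g_{1\dots 1\,2\dots 2}$ of $g=\sigma A f$ via the binomial formula \eqref{eq: g and f relation with indices}, writes $Ng$ as a sum of convolutions with the kernels $x_1^{a}x_2^{b}/|x|^{2k+2\ell+1}$, resums over the partition $r+s=p$, and recognizes the result as $(-1)^{J_2}(N_{k,\ell}f)_{i_1\dots i_k\delta(j_1,\dots,j_\ell)}$ (noting in passing that this is $A(N_{k,\ell}f)$). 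You instead observe that both normal operators are back-projections of the common datum $h=L_{k,\ell}f=I_{k+\ell}g$, so the comparison reduces to the single-factor identity $\eta^{j}=(-1)^{[j=1]}\xi^{\delta(j)}$, giving $(N_{k,\ell}f)_{i_1\dots i_k j_1\dots j_\ell}=(-1)^{J_1}(Ng)_{i_1\dots i_k\delta(j_1,\dots,j_\ell)}$, and then applying the involution $\delta$ (which swaps $J_1$ and $J_2=\ell-J_1$) to land on \eqref{eq: relation between normal operators}. Your sign bookkeeping checks out. What your approach buys is brevity and transparency — no binomial resummation, and the symmetrization $\sigma$ never has to be handled explicitly because it is already absorbed into the cited identity $L_{k,\ell}f=I_{k+\ell}g$ from \cite{kernel_MRT}; the cost is that you import that identity as an external input, whereas the paper's kernel computation is self-contained and in effect re-derives its content. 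Do make sure to state explicitly that $N=I_{k+\ell}^{*}I_{k+\ell}$ with the adjoint taken in the same parametrization of $T\Sb^1$ used for $L_{k,\ell}^{*}$, so that the two back-projection formulas carry identical measures and no relative constant appears.
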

\begin{proof}
    We have
    \begin{equation*}
        g_{i_1 \dots i_k j_1 \dots j_\ell} (x) = \sigma (i_1, \dots, i_k, j_1, \dots, j_\ell) {(-1)}^{J_2} f_{i_1 \dots i_k \delta(j_1, \dots, j_\ell)} (x).
    \end{equation*}
Hence for any $0 \leq p \leq k + \ell$, we obtain
    \begin{equation} \label{eq: g and f relation with indices}
        g_{\underbrace{1 \dots 1}_{p} \underbrace{2 \dots 2}_{k + \ell - p}} = \frac{1}{(k + \ell)!} \left\{p! (k + \ell - p)! \left[\sum_{\substack{r + s = p \\ 0 \leq r \leq k \\ 0 \leq s \leq \ell}} \binom{k}{r} \binom{\ell}{s} {(- 1)}^{\ell - s} f_{\underbrace{1 \dots 1}_{r} \underbrace{2 \dots 2}_{k - r} \underbrace{1 \dots 1}_{\ell - s} \underbrace{2 \dots 2}_{s}} \right]\right\}. 
    \end{equation}
   
    Keeping the above equation in mind, we start with the left-hand side of equation \eqref{eq: relation between normal operators}:
    \begin{align*}
        (\textit{N}& g)_{i_1 \dots i_k j_1 \dots j_\ell} (x) \\&= 2 \hspace{1mm} g_{m_1 \dots m_k n_1 \dots n_\ell} (x) \ast \frac{{x_1}^{I_1 + J_1 + M_1 + N_1} {x_2}^{I_2 + J_2 + M_2 + N_2}}{|x|^{2k + 2\ell+ 1}}\\
        &= 2 \sum_{p = 0}^{k + l} \binom{k + l}{p} g_{\underbrace{1 \dots 1}_{p} \underbrace{2 \dots 2}_{k + \ell - p}} (x) \ast \frac{{x_1}^{p + I_1 + J_1} {x_2}^{k + \ell - p + I_2 + J_2}}{|x|^{2k + 2\ell+ 1}}\\
        &= 2 \sum_{p = 0}^{k + l} \left[\sum_{\substack{r + s = p \\ 0 \leq r \leq k \\ 0 \leq s \leq \ell}} \binom{k}{r} \binom{\ell}{s} {(- 1)}^{\ell - s} f_{\underbrace{1 \dots 1}_{r} \underbrace{2 \dots 2}_{k - r} \underbrace{1 \dots 1}_{\ell - s} \underbrace{2 \dots 2}_{s}} (x) \right] \ast \frac{{x_1}^{p + I_1 + J_1} {x_2}^{k + \ell - p + I_2 + J_2}}{|x|^{2k + 2\ell+ 1}}\\
        &= 2 \sum_{\substack{r + s = 0 \\ 0 \leq r \leq k \\ 0 \leq s \leq \ell}}^{k + \ell} \binom{k}{r} \binom{\ell}{s} {(- 1)}^{\ell - s} f_{\underbrace{1 \dots 1}_{r} \underbrace{2 \dots 2}_{k - r} \underbrace{1 \dots 1}_{\ell - s} \underbrace{2 \dots 2}_{s}} (x) \ast \frac{{x_1}^{r + s + I_1 + J_1} {x_2}^{k + \ell - (r + s) + I_2 + J_2}}{|x|^{2k + 2\ell+ 1}}\\
        &= 2 \sum_{r = 0}^k \sum_{s = 0}^\ell \binom{k}{r} \binom{\ell}{s} {(- 1)}^{\ell - s} f_{\underbrace{1 \dots 1}_{r} \underbrace{2 \dots 2}_{k - r} \underbrace{1 \dots 1}_{\ell - s} \underbrace{2 \dots 2}_{s}} (x) \ast \frac{{x_1}^{r + s + I_1 + J_1} {x_2}^{k + \ell - (r + s) + I_2 + J_2}}{|x|^{2k + 2\ell+ 1}}\\
        &= 2 f_{m_1 \dots m_k n_1 \dots n_\ell} (x) \ast {(- 1)}^{N_1} \frac{{x_1}^{I_1 + J_1 + M_1 + N_2} {x_2}^{I_2 + J_2 + M_2 + N_1}}{|x|^{2k + 2\ell+ 1}}\\
        &= 2 {(- 1)}^{J_2} f_{m_1 \dots m_k n_1 \dots n_\ell} (x) \ast {(- 1)}^{J_2 + N_1} \frac{{x_1}^{I_1 + J_1 + M_1 + N_2} {x_2}^{I_2 + J_2 + M_2 + N_1}}{|x|^{2k + 2\ell+ 1}}\\
        &= {(- 1)}^{J_2} (\textit{N}_{k, \ell} f)_{i_1 \dots i_k \delta(j_1, \dots, j_\ell)} (x)\\
        &= A (N_{k,\ell}f)_{i_1 \dots i_k j_1 \dots j_\ell}(x).
    \end{align*}
    This finishes the proof.
\end{proof}
\begin{proof}[Proof of Theorem \ref{th_ucp_normal_op} and Theorem \ref{th_ucp_MiRT_1}]
We first show that $N_{k,\ell} f $ is smooth in $U$ to make sense of the assumption that $N_{k,\ell} f$ vanishes to infinite order at $x_0\in U$. By Lemma \ref{lem: relationship between normal operators} we have that $N_{k,\ell} f= Ng $, where $g= \sigma Af$. Since $f= \lambda w+ P(x)$ in $U$, this implies $ g= 0$ in $U$. Therefore by \cite{UCP_2022}*{Lemma 3.1} we have that $Ng$ is smooth in $U$, which further entails that $N_{k,\ell}f $ is smooth in $U$. This justifies the assumption $N_{k,\ell} f$ vanishes to infinite order at some $x_0\in U$.

Next, we have that the tensor field $g= \sigma A f$ and $Ng$ satisfies the assumption of Theorem $2.1$ from \cite{UCP_2022}.  This implies $ g=dv$ for some $v \in \E'(S^{k+\ell-1})$. Next, using the relation 
\[   f= \D 'u +\lambda w \iff g= \D v\] we conclude the proof of Theorem \ref{th_ucp_normal_op}.

To prove Theorem \ref{th_ucp_MiRT_1} we observe that if $f =\D' u +\lambda w'$, then 
\begin{align}
    g= \sigma A f = \sigma A (\D'u +\lambda w')= \D v.
\end{align}
To obtain the above relation, we have used the fact that $ \sigma A(\lambda w')=0$ and $A$ commutes with $\D'$. We have that $Ng$ vanishes to infinite order at $x_0\in U$ and $Rg=0$ in $U$, where $R$ is the Saint-Venant operator. Therefore, by \cite{UCP_2022}*{Theorem 2.1} we have that $g= \D v$. This completes the proof of Theorem \ref{th_ucp_MiRT_1}. 
\end{proof}

\subsection{Range characterization}\label{sec:range_characterization}
In this section, we characterize the range of the mixed ray transform acting on symmetric tensor fields belonging to $\mathcal{S}(S^k\times S^{\ell})$ in two dimensions. We use the result available for the ray transform acting on symmetric tensor fields in two dimensions: \cite{Pantyukhina_range_2d}.  The presentation and notation in this section are based on \cite{Momentum_ray_transform_2_2020}*{Section 3}. 

 Let $f$ be a symmetric $m$ tensor field in $\rt$. Then $f$ can be uniquely written as:
 \begin{align}
     f(x_1,x_2)= \sum_{j=0}^m \Tilde{f}_j(x_1,x_2)\, d x_1^{m-j}\, d x_2^j,
 \end{align}
where $\Tilde{f}_j=  \binom{m}{j}f_{\underbrace{1\cdots 1}_{m-j} \underbrace{ 2\cdots 2}_{j}} $ for $0\le j\le m.$
\newcommand{\C}{\mathbb{C}}
We  next identity $\rt$ with the complex plane $\mathbb{C}=\{ z: z= x_1+\I x_2 \}$, where $ \I= \sqrt{-1}$ is the imaginary unit. We also write a point $ (x,\xi) \in \rt\times \rt \setminus\{0\}$ as $(z,\zeta) \in \C \times \C\setminus\{0\} $ where $ z= x_1+\I x_2$ and $\zeta = \xi_1+\I \xi_2$. Then using the relations \cite{Momentum_ray_transform_2_2020}*{Equations $3.2-3.4$} we can write the ray transform of $f\in \mathcal{S}(S^m)$ 
as 
\begin{align}
   J: \mathcal{S}(S^m) \mapsto C^{\infty} (\rt\times \rt \setminus\{0\}),\quad   Jf(z,\zeta) =\int_{\R} \sum_{j=0}^m\Tilde{f}_j (z+t\zeta) \, \xi_1^{m-j}\, \xi_2^{j} \, dt.
\end{align}
Similarly we define the mixed ray transform of $f\in \mathcal{S}(S^k\times S^{\ell})$ in the following way
\begin{align}
    L_{k,\ell} f(z,\zeta) = Jg (z,\zeta), \quad \mbox{where $ g=\sigma A f$}
\end{align}
where the operators  $\sigma$ and $A$ are given by \eqref{def_of_sigma}, and \eqref{def_of_A} respectively.

\begin{proof}[Proof of Theorem \ref{th:range_characterization}]
    Suppose, there is a $f\in  \mathcal{S}(S^k\times S^{\ell})$ such that $ \phi = L_{k,\ell} f$, then using the relation $   L_{k,\ell} f(z,\zeta) = Jg (z,\zeta)$, where $ g=\sigma A f$  one can easily  verify conditions $[(i)-(ii)]$ similar to the case of the ray transform. 

Next we assume that $\phi$ satisfies $[(i)-(ii)]$. Since this coincides with the range characterization of the ray transform of a symmetric $(k+\ell)$ tensor fields in $\rt$, there is a $g \in \mathcal{S}(\rt; S^{k+\ell})$ such that 
\begin{align}
I_{k+\ell}g = \phi. 
\end{align}
Given such a $g\in  \mathcal{S}(\rt; S^{k+\ell}) $ we need to find a $f \in  \mathcal{S}(\rt; S^{k} \times S^{\ell})$ such that $ L_{k,\ell} f =\phi$. According to the kernel description, $f $ is unique modulo a term of the form $ \D'v+ \lambda w$, where the definition of  $\D'$ and $\lambda$ are given in \eqref{def: d' operator} and \eqref{def: lambda operator}. This can be done by repeating the proof of Theorem \ref{thm: tensor field decomposition}. We next define $ f$ in the following way:
\begin{align}
f = A^{-1} g\implies Af =g.
\end{align} 
 
Therefore, 
\begin{align}
L_{k,\ell} f = I_{k+\ell}( \sigma A f ) = I_{k+\ell}( \sigma A (A^{-1} g))= I_{k+\ell} (\sigma g) = I_{k+\ell}g= \phi.
\end{align} 
Note that the tensor field $f$ is not unique. This follows from combining the relation $\sigma A f= g$  and the proof of Theorem \ref{thm: tensor field decomposition}. To finish the proof, we have to show that $ Af$ is a symmetric $(k+\ell)$ tensor field. 

If $g$ is a $(k+\ell)$-symmetric tensor field, then we show that $f = A^{-1} g$ is trace-free $i.e.,$ $f$ satisfies $\mu f=0$.

From \cite[Equation 18]{kernel_MRT}, we have 
\begin{align*}
A^{-1} = {(- 1)}^\ell A \D \implies f_{i_1 \dots i_k j_1 \dots j_\ell} = {(A^{-1} g)}_{i_1 \dots i_k j_1 \dots j_\ell} = {(- 1)}^{N(j_1, \dots, j_\ell)} g_{i_1 \dots i_k \delta(j_1, \dots, j_\ell)}.
\end{align*}
This implies
   
    \begin{align*}
        f_{1 i_2 \dots i_k 1 j_2 \dots j_\ell} &= {(- 1)}^{N(1, j_2, \dots, j_\ell)} g_{1 i_2 \dots i_k 2 \delta(j_2, \dots, j_\ell)}\\
        &= - {(- 1)}^{N(2, j_2, \dots, j_\ell)} g_{2 i_2 \dots i_k 1 \delta(j_2, \dots, j_\ell)}\\
        &= - f_{2 i_2 \dots i_k 2 j_2 \dots j_\ell}.
    \end{align*}
    
Observe that $A f$ is symmetric over the first $k$ and last $\ell$ indices. Next using the fact that $\mu f=0$, we show that $A f$ is symmetric over all $(k + \ell)$ indices.  Now, to prove symmetry over all indices, it is sufficient to show the following:
        $$(Af)_{i_1 \dots i_k j_1 \dots j_\ell} = (Af)_{j_1 i_2 \dots i_k i_1 j_2 \dots j_\ell}.$$
        
        The above relation can be proved as follows: If $i_1 = j_1$, then the above equality is trivial. Let $i_1 \neq j_1$. This implies $i_1 = \delta(j_1)$ and $j_1 = \delta(i_1)$ (this holds true only in two dimensions). Now,
        \begin{align*}
            (Af)_{i_1 \dots i_k j_1 \dots j_\ell} &= {(- 1)}^{\ell - N(j_1, \dots, j_\ell)} f_{i_1 \dots i_k \delta(j_1, \dots, j_\ell)}\\
            &= - {(- 1)}^{\ell - N(\delta(j_1), j_2,\dots, j_\ell)} f_{i_1 \dots i_k \delta(j_1)\delta(j_2, \dots, j_\ell)}\\
            &= - {(- 1)}^{\ell - N(i_1, j_2, \dots, j_\ell)} f_{i_1 \dots i_k \delta(j_1)\delta(j_2, \dots, j_\ell)}\\
            &= {(- 1)}^{\ell - N(i_1, j_2, \dots, j_\ell)} f_{\delta(i_1) i_2 \dots i_k j_1\delta(j_2, \dots, j_\ell)}\\
            &= {(- 1)}^{\ell - N(i_1, j_2, \dots, j_\ell)} f_{j_1 i_2 \dots i_k \delta(i_1, j_2, \dots, j_\ell)} = (Af)_{j_1 i_2 \dots i_k i_1 j_2 \dots j_\ell}.
        \end{align*}
        In the above computation, since $i_1 = \delta(j_1)$ and $f$ is trace-free, we have used the following relation: 
        $$f_{i_1 i_2 \dots i_k \delta(j_1) j_2 \dots j_\ell} = - f_{\delta(i_1) i_2 \dots i_k \delta(\delta(j_1)) j_2 \dots j_\ell} = - f_{\delta(i_1) i_2 \dots i_k j_1 j_2 \dots j_\ell}.$$
        This completes the proof.    
    \end{proof}

\appendix
\section{Motivation behind Lemma \ref{lem: relation between components of f}: a pictorial illustration}
The number of independent components of the tensor $f \in C^\infty (\mathit{S}^k \times \mathit{S}^\ell)$ is $\binom{n + k - 1}{k} \times \binom{n + \ell - 1}{\ell}$, that is, in our case, $(k + 1)(\ell + 1)$. Our goal in lemma \ref{lem: relation between components of f} is to represent a component $f_{i_1 \dots i_k j_1 \dots j_\ell}$ of a $m =  k + \ell$ tensor field $f$ in terms of a single component $\displaystyle f_{\underbrace{1\dots 1}_{k} \underbrace{1\dots 1}_{\ell}}$. For this, we need $(k + 1)(\ell + 1) - 1 = k\ell + k + \ell$ independent conditions. The condition $\mu f = 0$ ($\delta^{ij} f_{i i_2 \dots i_k j j_2 \dots j_\ell} = 0$) gives $k \ell$ conditions and the condition $y_i f_{i i_2 \dots i_k j_1 \dots j_\ell} = 0$ gives another $k (\ell + 1)$ conditions. So in total, we have $k\ell + k (\ell + 1)$ conditions between the components of $f$, which is $\ell (k - 1)$ more than the required conditions. This gives a hint that it might be possible to relate each component of $f$ to a single component, which we already proved analytically on page 10. The following graphs give an alternate way to see the same result. We present these graphs for $m=5$ and different values of $k$ and $\ell$ such that $m = k + \ell = 5$.
\begin{figure}[H]
\centering
\begin{subfigure}[c]{0.48\textwidth}
\centering
\includegraphics[width=\textwidth]{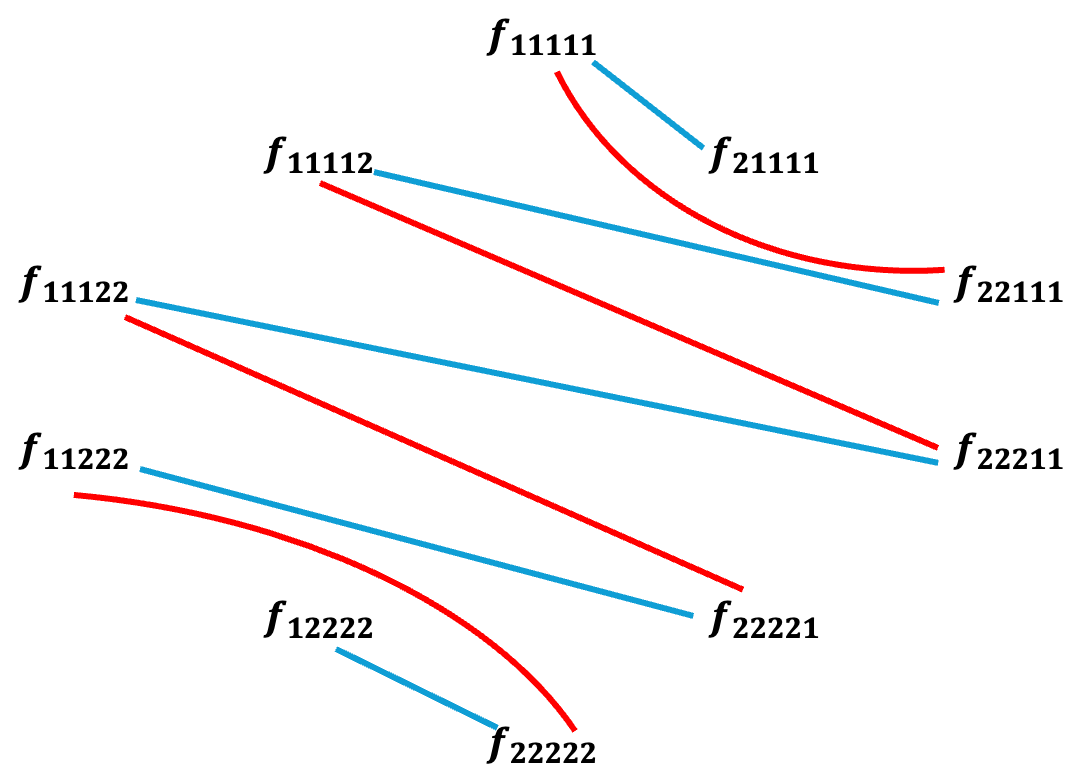}\caption{$m = 5$, $k = 1$, and $\ell = 4$}\label{fig:}
\end{subfigure}
\hfill
\begin{subfigure}[c]{0.48\textwidth}
\centering
\includegraphics[width=\textwidth]{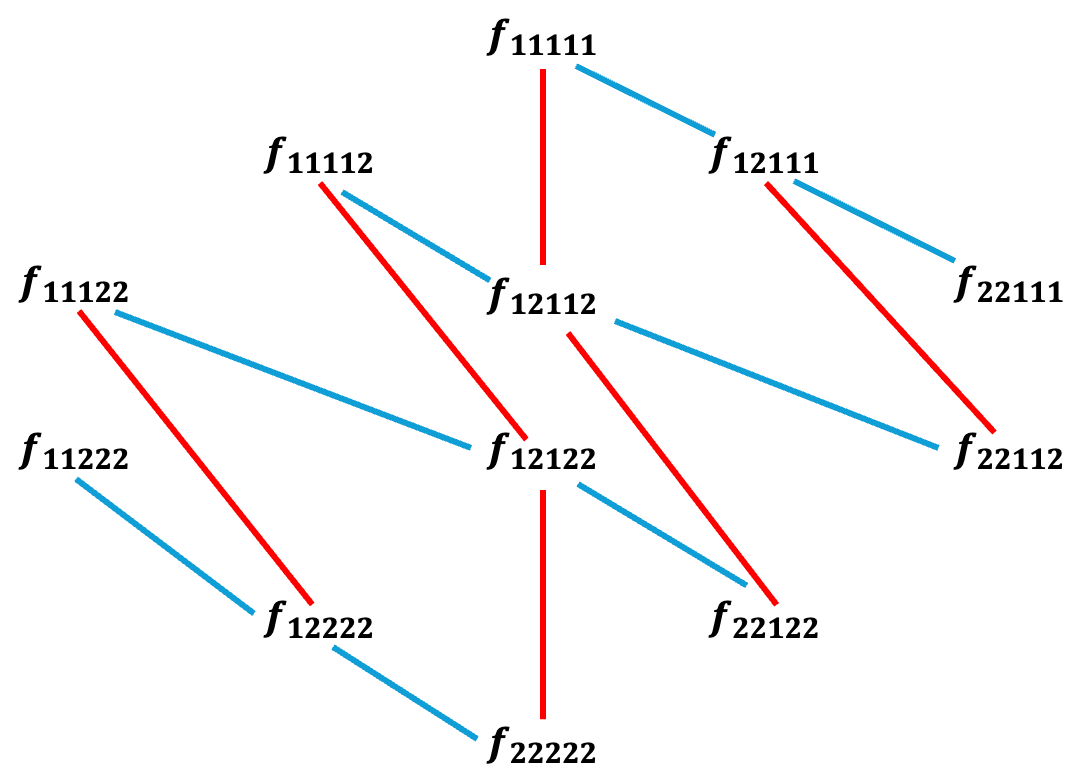}\caption{$m = 5$, $k = 4$, and $\ell = 1$}\label{fig:ss}
\end{subfigure}
\vfill
\centering
\begin{subfigure}[c]{0.48\textwidth}
\centering
\includegraphics[width=\textwidth]{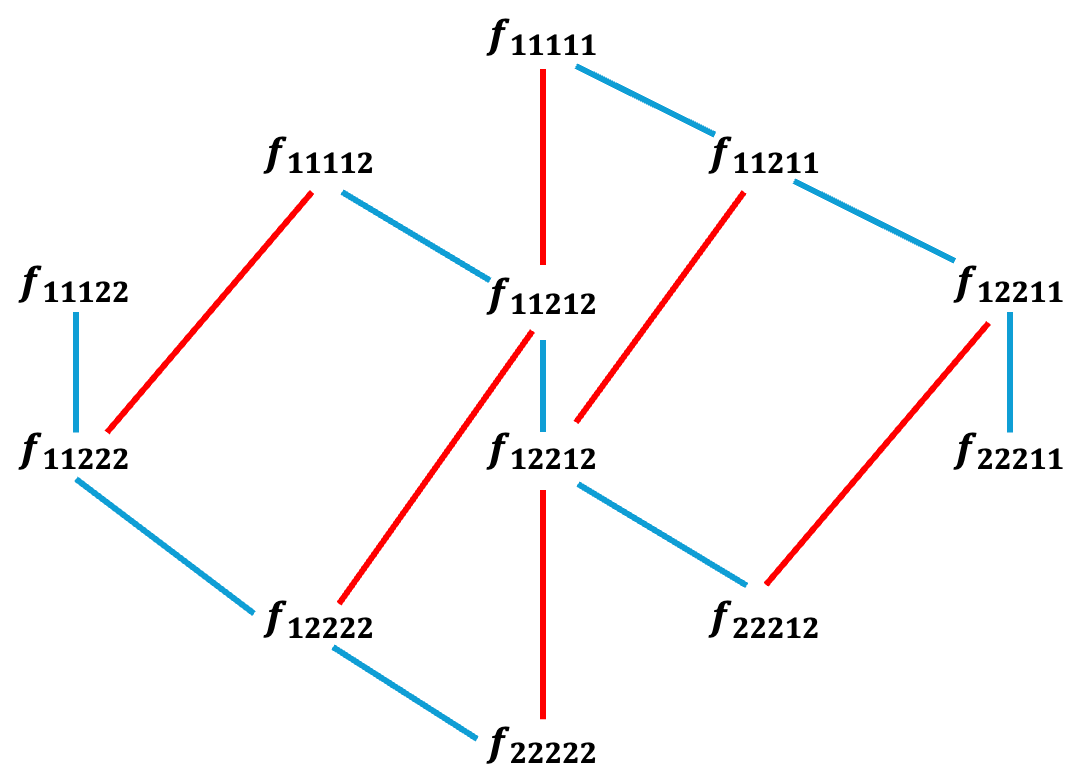}\caption{$m = 5$, $k = 2$, and $\ell = 3$}\label{fig: def of}
\end{subfigure}
\hfill
\begin{subfigure}[c]{0.48\textwidth}
\centering
\includegraphics[width=\textwidth]{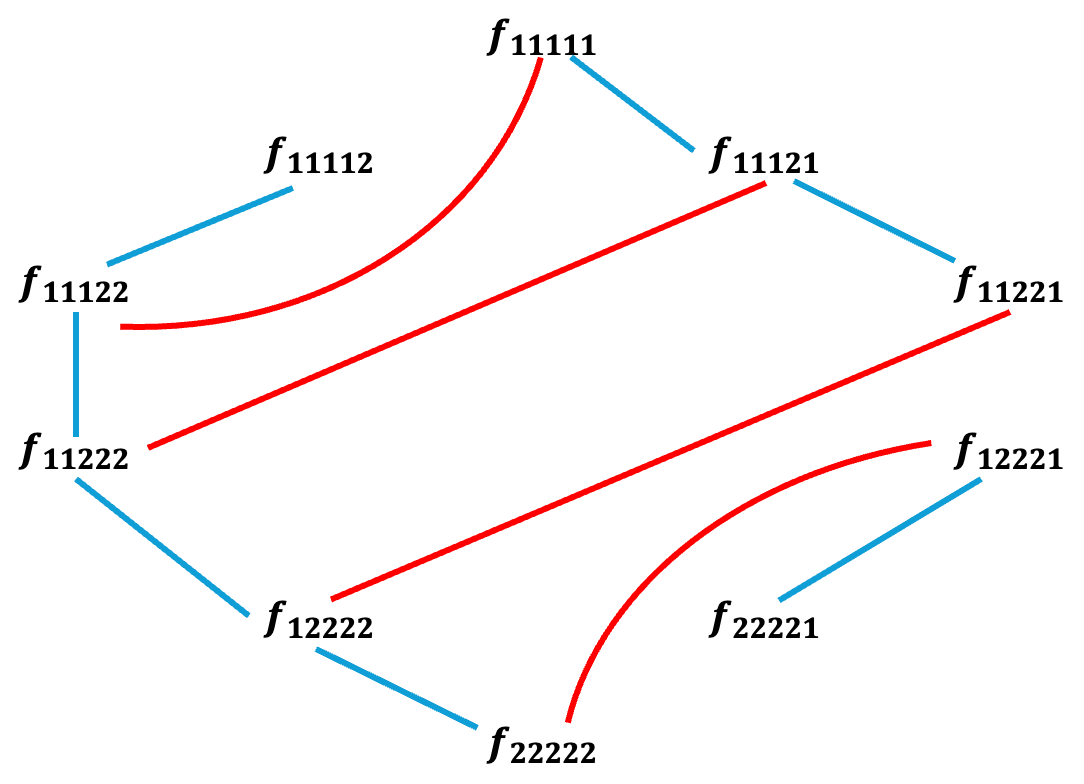}\caption{$m = 5$, $k = 3$, and $\ell = 2$}\label{fig:ANB}
\end{subfigure}\caption{Two components of $f$ are connected with a ``red path" if they are related by the trace-free condition and with a ``blue path" if they are related due to the other condition.}
\end{figure}
\noindent As we see, these graphs are connected, which shows that we can go from any specific component of $f$ to any other component of $f$ by choosing a combination of ``red" and ``blue" paths. One can play the same game for $m$-tensor field in $\mathbb{R}^2$.\\

\section*{Acknowledgements}\label{sec:acknowledge}
 RM was partially supported by SERB SRG grant No. SRG/2022/000947. CT was supported by the PMRF fellowship from the Government of India.
\bibliographystyle{siam}
\bibliography{reference}
\end{document}